\newtheorem{prop}{Proposition}[section]
\newtheorem{teo}[prop]{Theorem}
\newtheorem{lem}[prop]{Lemma}
\newtheorem{cor}[prop]{Corollary}
\theoremstyle{definition}
\newtheorem{defi}[prop]{Definition}
\newtheorem{example}[prop]{Example}
\newcommand{\Ho}{\mathrm{Ho}}
\newcommand{\Hom}{\mathrm{Hom}}
\newcommand{\Ker}{\mathrm{Ker }}
\newcommand{\Coker}{\mathrm{Coker }}
\newcommand{\Img}{\mathrm{Im }}
\newcommand{\pb}{\ar@{}[dr]|{\mbox{\LARGE{$\lrcorner$}}}}
\newcommand{\xra}[1]{\xrightarrow{#1}}
\newcommand{\lra}{\longrightarrow}
\newcommand{\cdga}[1]{\mathsf{CDGA}_{#1}}
\newcommand{\du}{*}
\newcommand{\TW}{\text{\tiny{$\mathrm{TW}$}}}
\newcommand{\wt}{\widetilde}
\newcommand{\ov}[1]{\overline{#1}}
\newcommand{\CC}{\mathbb{C}}
\newcommand{\PP}{\mathbb{P}}
\newcommand{\QQ}{\mathbb{Q}}
\newcommand{\ZZ}{\mathbb{Z}}
\newcommand{\Aa}{\mathcal{A}}
\newcommand{\Kk}{\mathcal{K}}
\newcommand{\Mm}{\mathcal{M}}
\newcommand{\kk}{\mathbf{k}}
\newcommand{\gris}{\cellcolor{black!8}}
\title[Rational homotopy of projective varieties with normal isolated singularities]
{Rational homotopy of complex projective varieties\\ with normal isolated singularities}
\author{David Chataur}
\address[D. Chataur]{Laboratoire Ami\'{e}nois de Math\'{e}matique Fondamentale et Appliqu\'{e}e\\  Universit\'{e} de Picardie Jules Verne\\ 
33, rue Saint-Leu 80039 Amiens Cedex 1, France}
\email{david.chataur@u-picardie.fr}
\author{Joana Cirici}
\address[J. Cirici]{
Fachbereich Mathematik und Informatik\\
Freie Universit\"{a}t Berlin\\  Arnimallee 3\\ 
14195 Berlin, Germany}
\email{jcirici@math.fu-berlin.de}
\thanks{J. Cirici acknowledges financial support from the Excellence Initiative of the DFG and the SPP 1786.
Partial support from the Spanish Ministry of Economy and Competitiveness under project MTM2013-42178-P
}
\subjclass[2010]{55P62, 32S35.}
\keywords{Rational homotopy, mixed Hodge theory, weight filtration,
fundamental groups, formality, singular varieties, isolated singularities, contractions.}
\begin{document}
\maketitle

\begin{abstract}
Let $X$ be a complex projective variety of dimension $n$ with only isolated normal singularities.
In this paper, we prove, using mixed Hodge theory, that if the link of each singular point of $X$ is $(n-2)$-connected, then
$X$ is a formal topological space. 
This result applies to a large class of examples, such as normal surface singularities, varieties
with ordinary multiple points, hypersurfaces with isolated singularities
and, more generally, complete intersections with isolated singularities.
We obtain analogous results for contractions of subvarieties.
\end{abstract}

\section{Introduction}
The rational homotopy type of a topological space $X$
is the commutative differential graded algebra (cdga for short) $\Aa_{pl}(X)$ in the homotopy category $\Ho(\cdga{\QQ})$
defined by inverting quasi-isomorphisms, where
$\Aa_{pl}:\mathsf{Top}\to \cdga{\QQ}$ is Sullivan's functor of rational piece-wise linear forms.
A topological space $X$ is said to be \textit{formal} if its rational algebra  of piece-wise linear forms is a \textit{formal cdga}:
there is a string of quasi-isomorphisms from $\Aa_{pl}(X)$ to its cohomology $H^*(\Aa_{pl}(X))\cong H^*(X;\QQ)$ considered as a cdga with trivial differential.
In particular, if $X$ is formal then its rational homotopy type is completely determined by its cohomology ring, and higher order Massey products
vanish. 
Using Hodge theory, Deligne,
Griffiths, Morgan and Sullivan \cite{DGMS}
proved that smooth projective varieties or, more generally, compact K\"{a}hler manifolds, are formal. 

Simpson \cite{Si4} showed that every finitely presented group $G$
is the fundamental group $\pi_1(X)$ of an irreducible projective variety $X$.
Later, Kapovich and Koll\'{a}r \cite{KK} showed that $X$ can be chosen 
to be a complex projective surface with simple normal crossing singularities only.
This implies the existence of non-formal complex projective varieties.
For instance, one may take $G$ to be the fundamental group of the complement of the Borromean rings,
which has non-trivial triple Massey products.
In this paper, we use mixed Hodge theory to show
that a large class of projective varieties with normal isolated singularities
are formal topological spaces.
We also study the mixed Hodge structures on the rational homotopy type of a contraction of a subvariety and prove
analogous results in this setting.

We next outline the contents of this paper.
For a complex algebraic variety $X$, the complex homotopy type $\Aa_{pl}(X)\otimes\CC$
can be computed from the differential bigraded algebra defined by the first term $E_1^{*,*}(X)$ of the \textit{multiplicative weight spectral sequence},
a multiplicative analogue of Deligne's weight spectral sequence \cite{DeHIII}.
This result was proven by Morgan \cite{Mo} for smooth quasi-projective varieties and by Cirici-Guill\'{e}n \cite{CG1} in the possibly singular case.
In Section $\ref{Sweight}$, we study the multiplicative weight spectral sequence
of a projective variety with isolated singularities and provide a simple description of its algebra structure in terms of a
resolution of singularities of the variety. This  gives an upper bound on the homotopy theoretic complexity of the variety.

The idea that purity of the weight filtration implies formality is part of the folklore on mixed Hodge theory and goes back to \cite{DGMS}.
In Section $\ref{Spurity}$, we give a simple proof of a refinement of this idea: we show that the purity of the weight filtration of a complex projective variety
up to a certain degree, implies formality of the variety up to the same degree.
We prove the main results of this paper in Section $\ref{Sisolated}$. 
We first show, using the multiplicative weight spectral sequence, that every normal complex projective surface is formal.
We then generalize this result to arbitrary dimensions and prove
formality for normal projective varieties
with isolated singularities whose link is $(n-2)$-connected,
where $n$ is the dimension of the variety. In particular,
complete intersections with isolated singularities are formal. Using similar techniques, we
show that if $X$ is a projective variety with normal isolated singularities admitting a resolution of singularities
with smooth exceptional divisor, then $X$ is a formal topological space.
Lastly, in Section $\ref{contractions}$ we prove analogous results for contractions of subvarieties. In particular, we show that if $Y\hookrightarrow X$
is a closed immersion of smooth projective varieties, then $X/Y$ is a formal topological space.

The results of this paper arose from a more general study (in preparation) of the vanishing of Massey products on the intersection cohomology
of projective varieties with only isolated singularities. However,
the simplicity of the statements and proofs in the classical setting of rational homotopy encouraged us to write them up separately.
Our more general study follows the intersection-homotopy treatment of \cite{CST},
where Chataur, Saralegi and Tanr\'{e} prove intersection-formality for certain spaces. In particular, they show
that any nodal hypersurface in $\CC \PP^4$ is formal.

\subsection*{Acknowledgments}
We would like to thank J. W. Morgan for asking us about 1-formality,
and A. Dimca for pointing out to us the results on the realization of fundamental groups.
Thanks also to F. Guill\'{e}n and V. Navarro-Aznar for useful comments.

J. Cirici acknowledges the Simons
Center for Geometry and Physics in Stony Brook at which part of this paper was written.

\section{Multiplicative weight spectral sequence}\label{Sweight}

Deligne proved that the $k$-th cohomology space $H^k(X;\QQ)$ of every complex algebraic variety $X$ 
carries a functorial \textit{mixed Hodge structure}:
this is given by an increasing filtration 
$$0=W_{-1}H^k(X;\QQ)\subset W_0H^k(X;\QQ)\subset\cdots\subset W_{2k}H^k(X;\QQ)=H^k(X;\QQ)$$
of the rational cohomology of $X$, called the \textit{weight filtration},
together with a decreasing filtration $F$
of the complex cohomology $H^k(X;\CC)$, called the \textit{Hodge filtration}, 
such that $F$
and its complex conjugate induce a Hodge decomposition of weight $p$
on each graded piece $Gr_p^WH^k(X;\CC)$.
If $X$ is smooth then $W_{k-1}H^k(X;\QQ)=0$, while if  $X$ is projective then $W_kH^k(X;\QQ)=H^k(X;\QQ)$.
Let us briefly explain how $W$ is defined in the latter case.
Let $X_\bullet\to X$ be a simplicial resolution of $X$:
this is a smooth simplicial variety  $X_\bullet$ together with an augmentation morphism $X_\bullet\to X$ 
satisfying cohomological descent (see \cite{DeHIII}, see also \cite{PS}, II.5).
Deligne showed that the associated spectral sequence
$\wt E_1^{p,q}(X):=H^q(X_p;\QQ)\Rightarrow H^{p+q}(X;\QQ)$
degenerates at the second stage, and that the induced filtration on the rational cohomology of $X$
is well-defined (does not depend on the chosen resolution) and is functorial for morphisms of varieties.
The \textit{weight filtration $W$} on $H^*(X;\QQ)$ is then defined
by d\'{e}calage of the induced filtration. We have
$Gr_q^WH^{p+q}(X;\QQ)\cong \wt E_2^{p,q}(X).$

Let $k\geq 0$. The weight filtration on $H^k(X;\QQ)$ is said to be \textit{pure of weight $k$} if
$$0=W_{k-1}H^k(X;\QQ)\subset W_{k}H^k(X;\QQ)=H^k(X;\QQ).$$
For instance, if $X$ is smooth and projective then $W$ on $H^k(X;\QQ)$ is pure of weight $k$, for all $k\geq 0$.
A well-known consequence of the Decomposition Theorem of
intersection homology is that a
projective variety $X$ with only isolated singularities satisfies \textit{semi-purity}: 
the weight filtration on $H^k(X;\QQ)$ is pure of weight $k$,
for all $k>n$, where $n=\dim(X)$ (\cite{Ste}, see also \cite{Na2} for a more direct proof).

Morgan \cite{Mo} introduced mixed Hodge diagrams of differential
graded algebras and proved the existence of functorial mixed Hodge structures on the
rational homotopy groups of smooth complex algebraic varieties.
His results were extended to the singular setting by Hain \cite{HaBull} and Navarro-Aznar \cite{Na} independently.
In particular, Navarro-Aznar introduced the Thom-Whitney simple functor
and developed the construction of algebras of piece-wise linear forms associated to simplicial varieties.
As a result, for any complex algebraic variety $X$, one has a \textit{multiplicative weight spectral sequence} $E_1^{*,*}(X)$ which 
is a well-defined algebraic invariant of $X$ in the homotopy category of differential bigraded algebras, and is homotopy equivalent, when 
forgetting the algebra structure, to Deligne's weight spectral sequence $\wt E_1^{*,*}(X)$. 
Furthermore, the multiplicative weight spectral sequence 
carries information on the rational homotopy type of the variety.

\subsection{Mixed Hodge diagrams and multiplicative weight spectral sequence}We next recall the definition of the multiplicative weight 
spectral sequence associated with a complex algebraic variety.

\begin{defi}
A \textit{filtered cdga} $(A,W)$ over a field $\kk$ is a cdga $A$ over $\kk$ together with an (increasing) filtration
$\{W_pA\}$ indexed by $\ZZ$ such that $W_{p-1}A\subset W_pA$, $d(W_pA)\subset W_pA$, and $W_pA\cdot W_qA\subset W_{p+q}A$.
\end{defi}

Every filtered cdga $(A,W)$ has an associated spectral sequence, each of whose stages $(E^{*,*}_r(A,W),d_r)$ is
a differential bigraded algebra with differential of bidegree $(r,1-r)$.
% \begin{defi}
% Let $r\geq 0$. A morphism $f:A\to B$ of filtered cdga's is called \textit{$E_r$-quasi-isomorphism} if the induced
% morphism $E^{*,*}_r(f):E^{*,*}_r(A)\to E^{*,*}_r(B)$ is a quasi-isomorphism of bigraded algebras (the map $E^{*,*}_{r+1}(f)$ is an isomorphism).
% \end{defi}

\begin{defi}
A \textit{mixed Hodge diagram (of cdga's over $\QQ$)} consists of
a filtered cdga $(\Aa_{\QQ},W)$ over $\QQ$, a bifiltered cdga $(\Aa_{\CC},W,F)$ over $\CC$, together with
a string of filtered quasi-isomorphisms
from $(\Aa_\QQ,W)\otimes \CC$ to $(\Aa_\CC,W)$.
In addition, the following axioms are satisfied:
\begin{enumerate}
\item[($\mathrm{MH}_0$)] The weight filtration $W$ is regular and exhaustive. The Hodge filtration $F$ is biregular.
The cohomology $H^*(\Aa_\QQ)$ has finite type.
\item[($\mathrm{MH}_1$)] For all $p\in\ZZ$, the differential of $Gr_p^W\Aa_\CC$ is strictly compatible with $F$.
\item[($\mathrm{MH}_2$)] For all $k\geq 0$ and all $p\in\ZZ$, the filtration $F$ induced on $H^k(Gr^W_p\Aa_{\CC})$ defines a pure Hodge structure of
weight $p+k$ on $H^k(Gr^W_p\Aa_\QQ)$.
\end{enumerate}
Such a diagram is denoted as $\Aa=\left((\Aa_\QQ,W)\stackrel{\varphi}{\dashleftarrow\dashrightarrow}(\Aa_\CC,W,F)\right).$
\end{defi}

By forgetting the multiplicative structures we recover the original notion of mixed Hodge complex introduced by Deligne
(see 8.1 of \cite{DeHIII}). In particular, the $k$-th cohomology group of every mixed Hodge diagram
is a mixed Hodge structure.

The following is a multiplicative version of Deligne's Theorem \cite{DeHIII} on the existence of functorial mixed Hodge structures in cohomology.
\begin{teo}[$\S$9 of \cite{Na}, see also \cite{HaBull}]\label{extensionavarro}
For every complex algebraic variety $X$ there exists a mixed Hodge diagram $\Aa(X)$ 
such that $\Aa(X)_\QQ\simeq \Aa_{pl}(X)$ and for all $k\geq 0$, the cohomology $H^k(\Aa(X))$ 
is isomorphic to Deligne's mixed Hodge structure on $H^k(X;\QQ)$.
This construction is well-defined and functorial for morphisms of varieties in the homotopy category of mixed Hodge diagrams.
\end{teo}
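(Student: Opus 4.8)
The plan is to build $\Aa(X)$ from geometric data attached to a resolution and then to obtain well-definedness and functoriality formally, via the extension criterion of Guill\'{e}n and Navarro-Aznar, rather than by comparing choices by hand. I would first dispose of the \emph{smooth} case. For a smooth variety $U$ with a good compactification $\ov{U}$ (smooth projective, with $D=\ov{U}\setminus U$ a normal crossing divisor) I would take $A_\CC$ to be the logarithmic de Rham complex $\Omega^\bullet_{\ov{U}}(\log D)$, equipped with the weight filtration $W$ by order of log poles and the Hodge filtration $F$ given by the b\^{e}te filtration, and $A_\QQ$ to be $\Aa_{pl}(U)$ with the rational weight filtration induced by the residue/Leray structure along $D$; the string $\varphi$ of filtered $E_1$-quasi-isomorphisms is furnished by Sullivan forms together with the logarithmic comparison. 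Verifying $(\mathrm{MH}_0)$--$(\mathrm{MH}_2)$ for this smooth diagram then reduces to Deligne's Hodge theory of the log complex: the Poincar\'{e} residue identifies $Gr_m^W$ with the (shifted) cohomology of the disjoint $m$-fold intersections of the components of $D$, which carry pure Hodge structures, and a d\'{e}calage of $W$ renormalizes the weights so that $H^n(Gr^W_pA_\CC)$ has weight $p+n$. This is essentially the smooth theory of Morgan and Navarro-Aznar.

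Next, for arbitrary projective $X$ I would choose a simplicial resolution $X_\bullet\to X$ by smooth varieties satisfying cohomological descent (Hironaka), and apply the smooth construction degreewise to obtain a cosimplicial mixed Hodge diagram $\Aa(X_\bullet)$. The crucial point — and precisely the reason Deligne's ordinary total complex is inadequate here — is that I must assemble this cosimplicial object into a \emph{single commutative} diagram. For this I would use Navarro-Aznar's Thom-Whitney simple functor $s_\TW$, a cdga-valued model for the total complex that is compatible with the filtrations $W$ and $F$, and set $\Aa(X):=s_\TW(\Aa(X_\bullet))$, a filtered (resp. bifiltered) cdga on the $\QQ$ (resp. $\CC$) side. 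Thom-Whitney cohomological descent then yields the quasi-isomorphism $\Aa_{pl}(X)\simeq s_\TW(\Aa_{pl}(X_\bullet))=\Aa(X)_\QQ$, and forgetting the multiplicative structure recovers Deligne's mixed Hodge complex, so that $H^n(\Aa(X))$ is Deligne's mixed Hodge structure on $H^n(X;\QQ)$; the axioms $(\mathrm{MH}_i)$ are inherited from the smooth case because $s_\TW$ preserves the relevant strictness and $E_1$-degeneration.

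The main obstacle is \emph{well-definedness and functoriality in the homotopy category of mixed Hodge diagrams}, since the construction a priori depends on the chosen compactifications, resolutions, and simplicial resolution. Rather than tracking these choices directly, I would invoke the Guill\'{e}n--Navarro-Aznar extension criterion: a contravariant functor defined on smooth projective varieties, valued in a suitable \emph{descent category} — here the category $\MHD$ of mixed Hodge diagrams with the $E_1$-quasi-isomorphisms as weak equivalences and $s_\TW$ as simple functor — and sending blow-up (acyclic) squares to homotopy-acyclic squares, extends, uniquely up to homotopy, to a functor on all of $\Sch$ that carries cohomological-descent data to homotopy limits. The technical heart is thus to check that $(\MHD, s_\TW)$ genuinely forms a descent category and that the smooth diagram above is \emph{multiplicatively} functorial and satisfies the acyclicity axiom for blow-ups. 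Once this is established, independence of all choices and functoriality in $\Ho(\MHD)$ follow formally, while the comparisons with $\Aa_{pl}(X)$ and with Deligne's mixed Hodge structure descend from their already-known non-multiplicative counterparts.
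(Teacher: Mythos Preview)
The paper does not give its own proof of this statement: Theorem~\ref{extensionavarro} is stated as a citation to \S9 of Navarro-Aznar's paper (with Hain's work as an alternative reference) and is used as a black box thereafter. There is therefore nothing in the paper to compare your proposal against.

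That said, your outline is a faithful sketch of the strategy in the cited references: the smooth case via the logarithmic de Rham complex (following Morgan and Navarro-Aznar), assembly along a simplicial resolution using the Thom-Whitney simple, and functoriality/well-definedness via the Guill\'{e}n--Navarro-Aznar extension criterion. One small caution: your phrase ``arbitrary projective $X$'' should read ``arbitrary $X$'', since the theorem is stated for all complex algebraic varieties, not only projective ones; correspondingly the smooth input to the extension criterion must already handle open smooth varieties (via good compactifications), which you do set up correctly. Beyond that, the proposal is an accurate roadmap of the literature proof, but it is not something the present paper itself carries out.
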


The rational component $(\Aa_\QQ,W)$ of every mixed Hodge diagram $\Aa$ is a filtered cdga over the rationals. Hence it 
has an associated spectral sequence $E_1^{*,*}(\Aa_\QQ,W)\Rightarrow H^*(X;\QQ)$.

\begin{teo}[\cite{CG1}, Theorem 3.23]\label{E1formal}
Let $\Aa$ be mixed Hodge diagram such that $H^0(\Aa_\QQ)\cong \QQ$. There is a 
string of quasi-isomorphisms of complex cdga's 
from $(\Aa_\QQ,W)\otimes\CC$ to $E_1(\Aa_\QQ,W)\otimes\CC$ compatible with the filtration $W$.
\end{teo}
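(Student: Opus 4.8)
The plan is to use the Hodge filtration $F$ as the source of a formality splitting for $W$, after passing to complex coefficients. First I would apply the functor $E_1$ to the given string of filtered $E_1$-quasi-isomorphisms between $(A_\QQ,W)\otimes\CC$ and $(A_\CC,W)$. Since an $E_1$-quasi-isomorphism induces a quasi-isomorphism of bigraded algebras on first pages, and since $-\otimes\CC$ is exact and hence commutes with the formation of $E_1$, this produces a string of $E_1$-quasi-isomorphisms connecting $E_1(A_\QQ,W)\otimes\CC$ with $E_1(A_\CC,W)$, where each bigraded algebra is regarded as a filtered cdga via the filtration by its first degree (so that its own first page is itself, with differential $d_1$). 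It therefore suffices to build a string of $E_1$-quasi-isomorphisms from $(A_\CC,W)$ to $E_1(A_\CC,W)$ and to concatenate: $(A_\QQ,W)\otimes\CC$ connects to $(A_\CC,W)$ by the given string, $(A_\CC,W)$ to $E_1(A_\CC,W)$ by the new one, and $E_1(A_\CC,W)$ back to $E_1(A_\QQ,W)\otimes\CC$ by the image of the given string under $E_1$.

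The decisive input is the purity supplied by $(\mathrm{MH}_2)$. With Deligne's indexing $E_1^{p,q}(A_\CC,W)=H^{p+q}(\mathrm{Gr}^W_{-p}A_\CC)$, axiom $(\mathrm{MH}_2)$ forces $E_1^{p,q}$ to carry a pure Hodge structure of weight $q$, depending only on the second index. As $d_1$ has bidegree $(1,0)$ it is a morphism of weight-$q$ Hodge structures, whereas for $r\geq 2$ the differential $d_r$ of bidegree $(r,1-r)$ would be a morphism from weight $q$ to weight $q-r+1$; since morphisms of pure Hodge structures of distinct weights vanish, $d_r=0$ and the rational weight spectral sequence degenerates at $E_2$. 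This diagonal purity is the seed of formality: Deligne's canonical bigrading $I^{p,q}$ of a mixed Hodge structure furnishes a functorial splitting of $W$ on the cohomology of the weight-graded pieces, hence an additional grading on $E_1(A_\CC,W)$ whose associated filtration is $W$.

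To promote this splitting from cohomology to the full cdga I would mirror the $dd^c$-argument of \cite{DGMS} in the abstract two-filtration setting: using $F$ and the strictness granted by $(\mathrm{MH}_1)$, one forms the subalgebra of $F$-compatible cocycles of $A_\CC$ and studies its two natural legs, the inclusion into $A_\CC$ and the projection onto $E_1(A_\CC,W)$, verifying via strictness that each induces a quasi-isomorphism on first pages. The hard part is exactly this passage from cohomology to cochains: the vanishing of the $d_r$ with $r\geq 2$ and Deligne's splitting are automatic and functorial at the level of Hodge structures, but formality of $(A_\CC,W)$ demands that the splitting be realized multiplicatively on all of $A_\CC$ (or on a model), compatibly with the differential, and not merely on $H^*$. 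This is where the multiplicative refinement of mixed Hodge theory is indispensable: the Thom-Whitney construction of \cite{Na}, together with the strictness axioms $(\mathrm{MH}_1)$ and $(\mathrm{MH}_2)$, is what guarantees that the relevant $dd^c$-type lemma holds at the level of cdga's and that the maps above are genuine $E_1$-quasi-isomorphisms.
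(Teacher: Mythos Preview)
The paper does not give its own proof of this theorem; it is quoted verbatim as a citation to \cite{CG1}, Theorem~2.26, with only the remark that under finite-type hypotheses the result descends to $\QQ$. So there is no in-paper argument to compare against directly.

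As for your proposal itself: the reduction step---passing along the given comparison string to $(A_\CC,W)$, and then back along its image under $E_1$---is correct and is exactly how one isolates the problem to the complex side. The identification of the key Hodge-theoretic input is also right: axiom~($\mathrm{MH}_2$) forces $E_1^{p,q}$ to be pure of weight $q$, whence $d_r=0$ for $r\geq 2$ and the spectral sequence degenerates at $E_2$. Where your sketch becomes thin is the final paragraph. The phrase ``subalgebra of $F$-compatible cocycles'' is not a well-defined object in this generality, and the analogy with the $dd^c$-lemma of \cite{DGMS} does not carry over verbatim: there is no second real differential $d^c$ here, only a second filtration, and strictness of $d$ with respect to $F$ on $\mathrm{Gr}^W$ (axiom~($\mathrm{MH}_1$)) is weaker than a genuine $dd^c$-lemma. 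Deligne's splitting $I^{p,q}$ indeed gives a functorial decomposition on each $E_1^{p,q}$, but promoting that splitting to a multiplicative, $d$-compatible decomposition of $(A_\CC,W)$ itself---or of a model of it---is precisely the nontrivial content of the theorem, and your outline does not indicate how this is done.

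For the record, the argument in \cite{CG1} does not proceed by a direct $dd^c$-style zig-zag on $A_\CC$. It goes instead through the homotopy theory of filtered cdga's: one builds an $E_1$-minimal (Sullivan-type) model of the mixed Hodge diagram and shows, inductively on the stages of the model and using the Deligne splitting at each step, that the differential of this model has pure weight, i.e.\ coincides with its own $d_1$. This yields a filtered cdga whose $E_1$ is itself, giving the desired string. This is closer to Morgan's original approach in \cite{Mo} than to \cite{DGMS}.
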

The proof of the above result uses minimal models in the sense of rational homotopy. 
This is why we ask that mixed Hodge diagrams are cohomologically connected.
Under some finite type conditions the above result is also valid over $\QQ$ (see \cite{CG1}, Theorem 2.26).
We remark that in \cite{CG1}, a weaker notion of mixed Hodge diagram is used. This does not affect the above result.
\begin{defi}
Let $X$ be a complex algebraic variety. The \textit{multiplicative weight spectral sequence of $X$} is the spectral sequence
$$E_1^{*,*}(X):=E_1^{*,*}(\Aa(X)_\QQ,W)\Longrightarrow H^*(X;\QQ)$$
 associated with the filtered cdga $(\Aa(X)_\QQ,W)$ given by the rational component of $\Aa(X)$.
\end{defi}
This is a well-defined algebraic invariant of $X$ in the homotopy category of differential bigraded algebras.
When forgetting the multiplicative structures, there is a homotopy equivalence of bigraded complexes between $E^{*,*}_1(X)$ and
Deligne's weight spectral sequence $\wt E^{*,*}_1(X)$.

The main advantage of the multiplicative weight spectral sequence with respect to Deligne's weight spectral sequence
is that by Theorem $\ref{E1formal}$, the former carries information about the rational homotopy type of $X$.

\subsection{Thom-Whitney simple}Theorem $\ref{extensionavarro}$ relies on the Thom-Whitney simple functor, which
associates, to every strict cosimplicial cdga $A^\bullet$ over a field $\kk$ of characteristic zero,
a new cdga $s_\TW(A^\bullet)$ over $\kk$. This  cdga is homotopically equivalent, as a complex, 
to the total complex  of the original cosimplicial cdga.
Hence the Thom-Whitney 
simple can be viewed as a multiplicative version of the total complex. We next recall its construction.
For every $\alpha\geq 0$, denote by $\Omega_\alpha$ the cdga given by
$$\Omega_\alpha:={{\Lambda(t_0,\cdots,t_\alpha,dt_0,\cdots,dt_\alpha)}\over{\sum t_i-1,\sum dt_i}}$$
where $\Lambda_\alpha:=\Lambda(t_0,\cdots,t_\alpha,dt_0,\cdots,dt_\alpha)$ denotes the free cdga over $\kk$
generated by $t_i$ in degree 0 and $dt_i$ in degree 1. 
The differential 
on $\Lambda_\alpha$ is defined by $d(t_i)=dt_i$ and $d(dt_i)=0$. 
For all $0\leq i\leq \alpha$, define face maps $\delta_\alpha^i:\Omega_\alpha\to \Omega_{\alpha-1}$  by letting
$$
\delta_\alpha^i(t_k)=\left\{
\begin{array}{lll}
t_k&,& k<i\\
0&,& k=i\\
t_{k-1}&,&k>i
\end{array}
\right..
$$
These definitions make $\Omega_\bullet=\{\Omega_\alpha,\delta_\alpha^i\}$ into a strict simplicial
cdga (the adjective \textit{strict} accounts for the fact
that we do not require degeneracy maps).

Recall that the total complex $s(K^\bullet)$ of a strict cosimplicial cochain complex $K^\bullet$ is given by
$$s(K^\bullet):=\int_\alpha K^\alpha\otimes C^*(\Delta^{|\alpha|})=\bigoplus_\alpha K^\alpha [-|\alpha|],$$
where $C^*(\Delta^{|\alpha|})$ denotes the cochain complex of $\Delta^{|\alpha|}$. Analogously, we have:
\begin{defi}[\cite{Na}, $\S3$]
Let $A^\bullet$ be a strict cosimplicial cdga over $\kk$.
The \textit{Thom-Whitney simple of $A^\bullet$} is the cdga over $\kk$ defined by the end
$$s_\TW(A^\bullet):=\int_\alpha A^\alpha\otimes\Omega_\alpha.$$
\end{defi}

\begin{example}\label{doublemappingpath}
Let $f,g:A\rightrightarrows B$ be morphisms of cdga's.
Then 
$s_\TW(f,g)$ is given by the pull-back
$$
\xymatrix{
\pb\ar[d]
s_\TW(f,g)\ar[r]&B\otimes\Lambda(t,dt)\ar[d]^{(\delta^0,\delta^1)}\\
A\ar[r]^{(f,g)}&B\times B
}
$$
where $\delta^x:B\otimes\Lambda(t,dt)\to B$ is the evaluation map given by $t\mapsto x$ and $dt\mapsto 0$.
\end{example}

We shall need the following filtered version of the Thom-Whitney simple. Given $r\geq 0$,
consider on the cdga $\Omega_\alpha$ the multiplicative increasing filtration $\sigma[r]$ defined by letting
$t_i$ be of weight 0 and $dt_i$ be of weight $-r$, for all $0\leq i\leq \alpha$.
Note that $\sigma[0]$ is the trivial filtration, while $\sigma[1]$ is the b\^{e}te filtration.
\begin{defi}\label{filTW}
Let $(A,W)^\bullet$ be a strict cosimplicial filtered cdga. The \textit{$r$-Thom-Whitney simple} of $(A,W)^\bullet$ is the filtered cdga
$(s_{\TW}(A^\bullet),W(r))$ defined by
$$W(r)_ps_{\TW}(A^\bullet):=\int_\alpha\sum_q  \left(W_{p-q}A^\alpha\otimes \sigma[r]_{q}\Omega_\alpha\right).$$
\end{defi}
To study the weight spectral sequence we shall mostly be interested in the behavior of the filtration $W(1)$ of the Thom-Whitney simple,
which in the setting of complexes corresponds to the diagonal filtration $\delta(W,L)$ of $s(-)$ introduced in 7.1.6 of \cite{DeHIII}
(see also Section $\S$2 of \cite{CG3}). 
The following is a matter of verification.
\begin{lem}\label{sssimple}
Let $(A,W)^\bullet$ be a finite strict cosimplicial filtered cdga. The spectral sequence associated
with $(s_\TW(A^\bullet),W(1))$ satisfies
$$E_1^{p,q}(s_{\TW}(A^\bullet),W(1)):=H^{p+q}(Gr_{-p}^{W(1)}s_{\TW}(A^\bullet))\cong
\int_\alpha \sum_m \left(E_1^{p-m,q}(A^\alpha,W)\otimes \Omega_\alpha^m\right).
$$
\end{lem}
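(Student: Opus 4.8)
The plan is to compute the $E_1$-page as $H^{p+q}(Gr_{-p}^{W(1)}s_\TW(A^\bullet))$ by first determining the associated graded $Gr^{W(1)}$ together with its induced differential, and only then passing to cohomology. For each fixed $\alpha$, the filtration of Definition \ref{filTW} is nothing but the convolution filtration of $W$ on $A^\alpha$ and $\sigma[1]$ on $\Omega_\alpha$. Since the associated graded of a tensor product filtration is the tensor product of the associated gradeds, and since a form of degree $m$ has $\sigma[1]$-weight $-m$ (so that $Gr_{-m}^{\sigma[1]}\Omega_\alpha=\Omega_\alpha^m$), this gives levelwise
\[
Gr_{-p}^{W\otimes\sigma[1]}(A^\alpha\otimes\Omega_\alpha)\cong\bigoplus_m Gr_{-(p-m)}^W A^\alpha\otimes\Omega_\alpha^m.
\]
I would then check that forming this graded is compatible with the end, using the $\kk$-flatness of each $\Omega_\alpha$ and the finiteness of the cosimplicial object, to conclude $Gr_{-p}^{W(1)}s_\TW(A^\bullet)\cong\int_\alpha\bigoplus_m Gr_{-(p-m)}^W A^\alpha\otimes\Omega_\alpha^m$.

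Next I would identify the induced differential $d_0$. The intrinsic differential of $s_\TW(A^\bullet)$ is induced by $d_A\otimes 1\pm 1\otimes d_\Omega$, the cosimplicial structure being encoded in the end rather than in a separate differential (as in Example \ref{doublemappingpath}). The summand $d_A$ preserves $W$ and ignores $\Omega_\bullet$, hence preserves the $W(1)$-degree, whereas $d_\Omega$ raises the form degree and therefore strictly lowers the $\sigma[1]$- and $W(1)$-degree. Thus on $Gr^{W(1)}$ only $d_A$ survives, so that $E_1(s_\TW(A^\bullet),W(1))=H(Gr^{W(1)}s_\TW(A^\bullet),d_A)$ with the de Rham differential killed.

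Finally I would compute this cohomology. Writing $T(-):=\int_\alpha(-)\otimes\Omega_\alpha$ for the Thom-Whitney end on finite cosimplicial complexes, the crucial point is that $T$ is exact: being a limit it is automatically left exact, and right exactness follows from the flatness of the $\Omega_\alpha$ together with the basic acyclicity properties of $s_\TW$ from \cite{Na} (the end of a levelwise acyclic complex is acyclic). Granting this, $T$ preserves kernels and images of $d_A$, so applying it to the cosimplicial complex $(Gr^W A^\bullet,d_A)$ yields $H(T(Gr^W A^\bullet),d_A)\cong T\big(H(Gr^W A^\bullet,d_A)\big)=\int_\alpha\bigoplus_m E_1^{\bullet,\bullet}(A^\alpha,W)\otimes\Omega_\alpha^m$. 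A degree count then finishes the identification: an element of $Gr_{-(p-m)}^W A^\alpha\otimes\Omega_\alpha^m$ of total degree $p+q$ has $A$-degree $(p-m)+q$, whence $H^{p+q}\big(Gr_{-(p-m)}^W A^\alpha\otimes\Omega_\alpha^m\big)\cong E_1^{p-m,q}(A^\alpha,W)\otimes\Omega_\alpha^m$, which assembles to the claimed formula. I expect this interchange of cohomology with the end to be the genuine obstacle: cohomology does not commute with limits in general, and the finite end is an equalizer of products, so the step must be justified by the special flat, acyclicity-preserving nature of $\Omega_\bullet$ rather than by formal nonsense. The remaining ingredients — the behaviour of the convolution filtration, the vanishing of $d_\Omega$ on the graded, and the degree bookkeeping — are the routine verifications the statement alludes to.
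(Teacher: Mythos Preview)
Your argument is correct and is precisely the verification the paper omits: the paper's entire proof reads ``The following Lemma is a matter of verification,'' so there is nothing to compare against beyond confirming that your steps are the intended ones. One sharpening: the exactness of the Thom--Whitney end $T(-)=\int_\alpha(-)\otimes\Omega_\alpha$, which you correctly isolate as the only nontrivial point, is most cleanly justified not by ``acyclicity'' but by the \emph{extension property} of $\Omega_\bullet$ (the restriction $\Omega_n\to\Omega_{\partial\Delta^n}$ is surjective, as in Sullivan's and Navarro-Aznar's treatments); this is what lets you lift an element of $T(A'')$ along a levelwise surjection $A^\bullet\twoheadrightarrow A''{}^\bullet$, exactly as in the pullback description of Example~\ref{doublemappingpath}.
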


Deligne's simple of a cosimplicial mixed Hodge complex $\Kk^\bullet$
is the diagram of complexes given by
$$s_{D}(\Kk^\bullet):=\left((s(\Kk_\QQ^\bullet),\delta(W,L))
\stackrel{s(\varphi)}{\dashleftarrow\dashrightarrow} (s(\Kk_\CC^\bullet),\delta(W,L),F)\right),$$
which by Theorem 8.1.15 of \cite{DeHIII}, is again a mixed Hodge complex.
The following Lemma is a multiplicative analogue of this result and
gives a Thom-Whitney simple in the category of mixed Hodge diagrams.
\begin{lem}[$\S7.11$ of \cite{Na}]\label{TWisMHD}
Let $\Aa^\bullet$ be a strict cosimplicial mixed Hodge diagram.
The diagram of cdga's given by
$$s_{\TW}(\Aa^\bullet):=\left((s_{\TW}(\Aa_\QQ^\bullet),W(1))\stackrel{s_{\TW}(\varphi)}{\dashleftarrow\dashrightarrow}
(s_{\TW}(\Aa_\CC^\bullet),W(1),F(0))\right)$$
is a mixed Hodge diagram, which is homotopy equivalent, as a complex, to Deligne's simple $s_{D}(\Aa^\bullet)$.

\end{lem}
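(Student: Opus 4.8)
The plan is to check, one by one, the data and the three axioms in the definition of a mixed Hodge diagram for $s_{\TW}(\Aa^\bullet)$, in each case reducing to the corresponding levelwise statement for the diagrams $\Aa^\alpha$. Two structural features drive the reduction. First, $s_{\TW}$ is $\kk$-linear and functorial and, after forgetting filtrations, is homotopy equivalent to the total complex; hence it commutes with $-\otimes\CC$ (as $\Omega_\bullet$ is defined over $\QQ$) and sends levelwise quasi-isomorphisms to quasi-isomorphisms. Second, the Hodge filtration of $s_{\TW}(\Aa^\bullet)$ is $F(0)$, built from the \emph{trivial} filtration $\sigma[0]$ on $\Omega_\bullet$; thus $F(0)$ only filters the $A_\CC^\alpha$-direction and places each $\Omega_\alpha^m$ in a single Hodge step, i.e. as a pure Hodge structure of weight $0$ and type $(0,0)$.

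I would begin with the comparison string. Applying the functor $s_{\TW}$ to $\varphi^\bullet$ yields a string between $(s_{\TW}(A_\QQ^\bullet),W(1))\otimes\CC\cong(s_{\TW}(A_\QQ^\bullet\otimes\CC),W(1))$ and $(s_{\TW}(A_\CC^\bullet),W(1))$. To see that each arrow is a filtered $E_1$-quasi-isomorphism, apply Lemma \ref{sssimple}: the induced map on $E_1(-,W(1))$ is $\int_\alpha\bigoplus_m\bigl(E_1(\varphi^\alpha,W)\otimes\mathrm{id}_{\Omega_\alpha^m}\bigr)$. Each $E_1(\varphi^\alpha,W)$ is a quasi-isomorphism by hypothesis; tensoring with the flat $\Omega_\alpha^m$ and then applying the end preserves this, so $E_1(s_{\TW}(\varphi))$ is a quasi-isomorphism. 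For axiom $(\mathrm{MH}_0)$, exhaustiveness and regularity of $W(1)$ follow from those of $W$ on each $A^\alpha$ together with the boundedness of $\sigma[1]$ on $\Omega_\alpha$ (in a fixed total degree only finitely many $m$ contribute, so the end is effectively finite and Lemma \ref{sssimple} applies); biregularity of $F(0)$ is immediate from biregularity of $F$ and the triviality of $\sigma[0]$; and finite type of the cohomology follows from the finite-type hypothesis levelwise via the spectral sequence of Lemma \ref{sssimple}.

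The core is $(\mathrm{MH}_1)$ and $(\mathrm{MH}_2)$. Taking $Gr^{W(1)}$ in Definition \ref{filTW} and using that the de Rham differential of $\Omega_\bullet$ lowers the $\sigma[1]$-weight by one, hence vanishes on the associated graded, I would identify
$$Gr_p^{W(1)}s_{\TW}(A_\CC^\bullet)\;\cong\;\int_\alpha\bigoplus_m\bigl(Gr_{p+m}^W A_\CC^\alpha\otimes\Omega_\alpha^m\bigr),$$
with differential induced solely by that of $Gr^W A_\CC^\bullet$ and $F(0)$ restricting to $F$ on the first factor and to the trivial filtration on $\Omega_\alpha^m$. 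For $(\mathrm{MH}_1)$, the differential of $Gr^W A_\CC^\alpha$ is strictly $F$-compatible by the axiom for $\Aa^\alpha$, tensoring with the $F$-flat $\Omega_\alpha^m$ keeps it strict, and passing strictness through the end is the remaining point (see below). Granting $(\mathrm{MH}_1)$, the filtration $F(0)$ on $E_1^{p,q}(s_{\TW}(A^\bullet),W(1))$ is well defined and, by Lemma \ref{sssimple}, equals $\int_\alpha\bigoplus_m\bigl(E_1^{p-m,q}(A^\alpha,W)\otimes\Omega_\alpha^m\bigr)$. Since the axioms for $\Aa^\alpha$ amount to saying that each $E_1^{s,t}(A^\alpha,W)$ is a pure Hodge structure of weight $t$, and $\Omega_\alpha^m$ is pure of weight $0$, every summand is pure of weight $q$ and all structure maps (the face maps $\delta^i$ and the cosimplicial maps) are morphisms of weight-$q$ Hodge structures; as the end is a finite limit in the abelian category of pure Hodge structures of weight $q$, the result is pure of weight $q$, which is precisely $(\mathrm{MH}_2)$ for $s_{\TW}$.

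The main obstacle is the step invoked above in $(\mathrm{MH}_1)$: passing $F$-strictness of the differential through the end. Equivalently, one must know that $\int_\alpha$ commutes with $Gr^F$ and with cohomology, so that the $E_1$-degeneration of the $F$-spectral sequence of each $Gr^W A_\CC^\alpha$ (which is $(\mathrm{MH}_1)$ levelwise) propagates to $Gr^{W(1)}s_{\TW}(A_\CC^\bullet)$. Ends are limits and do not in general commute with these operations; what saves the argument is the filtered flatness of $(\Omega_\bullet,\sigma[r])$ and the resulting exactness of $s_{\TW}$ with respect to the filtrations, which is the central technical input of Navarro-Aznar (the $W(1)$-side being already encapsulated in Lemma \ref{sssimple}). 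Once this exactness is in place, the identifications of associated gradeds and $E_1$-terms used above are justified and the three axioms follow from their levelwise counterparts.
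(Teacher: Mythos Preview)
The paper does not give its own proof of this lemma: it is stated with the attribution ``\S7.11 of \cite{Na}'' and used as a black box. So there is no argument in the paper to compare your proposal against.

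That said, your outline is a faithful sketch of how the result is actually established in Navarro-Aznar's framework, and you have correctly isolated the one genuine difficulty: showing that the $F$-strictness of the differential on each $Gr^W_pA_\CC^\alpha$ survives the end. Your other verifications (the comparison string via Lemma~\ref{sssimple}, the $(\mathrm{MH}_0)$ checks, and the identification of $Gr^{W(1)}$) are routine and correct. One small point to tighten: in your $(\mathrm{MH}_2)$ paragraph you write that ``the end is a finite limit in the abelian category of pure Hodge structures of weight $q$,'' but the factors $\Omega_\alpha^m$ are infinite-dimensional, so the objects involved are not pure Hodge structures in the usual finite-type sense. The argument still goes through---the Hodge decomposition on each tensor factor is induced from that of $E_1^{p-m,q}(A^\alpha,W)$, and the end respects it because the face maps are morphisms of Hodge structures---but you should phrase it as compatibility of the $F$ and $\overline F$ decompositions with the limit, rather than invoking the abelian category of (finite-dimensional) pure Hodge structures.
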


\subsection{Main examples}\label{seccioexemples}We next give a description of the multiplicative weight 
spectral sequence in some particular situations of interest for this paper.

\subsubsection*{Smooth projective varieties}
Let $X$ be a smooth projective variety. A mixed Hodge diagram $\Aa(X)$ for $X$ is given by the data
$\Aa(X)_\QQ:=\Aa_{pl}(X)$ and  $\Aa(X)_\CC:=\Aa_{dR}(X;\CC)$ the complex de Rham algebra, with $W$ the trivial
filtration and $F$ the classical Hodge filtration.
The multiplicative weight spectral sequence satisfies $E_1^{0,*}(X)=H^*(X;\QQ)$ and $E_1^{p,*}(X)=0$ for all $p>0$.

\subsubsection*{Varieties with normal crossings}
Let $D\hookrightarrow \wt X$ be a simple normal crossings divisor in a smooth projective variety $\wt X$ of dimension $n$. We may write
$D=D_1\cup\cdots \cup D_N$ as the union of
of irreducible smooth varieties meeting transversally. Let $D^{(0)}=\wt X$ and for all $p>0$, denote by $D^{(p)}=\bigsqcup_{|I|=p}D_I$
the disjoint union of all 
$p$-fold intersections 
$D_I:=D_{i_1}\cap\cdots \cap D_{i_p}$ where $I=\{i_1,\cdots,i_p\}$ denotes an ordered subset of $\{1,\cdots,N\}$.
Since $D$ has normal crossings, it follows that $D^{(p)}$ is a smooth projective variety of dimension $n-p$. 
For $1\leq k\leq p$, denote by $j_{I,k}:D_I\hookrightarrow D_{I\setminus \{i_k\}}$ the inclusion 
and let $j_{p,k}:=\bigoplus_{|I|=p} j_{I,k}:D^{(p)}\hookrightarrow D^{(p-1)}$.
This defines a simplicial resolution
$D_\bullet=\{D^{(p)}, j_{p,k}\}\lra D$, called the \textit{canonical hyperresolution} of $D$.
Deligne's weight spectral sequence is the first quadrant spectral sequence given by (see for example $\S4$ of \cite{GS})
$$\wt E_1^{p,q}(D)=H^q(D^{(p+1)};\QQ)\Longrightarrow H^{q+p}(D;\QQ)$$
with the differential $\wt d_1^{p,*}=j_{p+2}^*$ defined via the 
combinatorial restriction morphisms 
$$j_{p}^*:=\sum_{k=1}^p (-1)^{k-1} (j_{p,k})^{*}:H^{*}(D^{(p-1)};\QQ)\lra H^*(D^{(p)};\QQ).$$
By computing the cohomology of $\wt E^{*,*}_1(D)$ we obtain:
$$Gr^W_q H^{p+q}(D;\QQ)\cong \wt E_2^{p,q}(D)=\Ker (d_1^{p,q})/\Img(d_1^{p-1,q})\cong
\left\{ 
\begin{array}{lll}
 \Ker(j_{2}^*)^q&,&p=0\\
  \Ker(j_{p+2}^*)^q/\Img(j_{p+1}^*)^q&,&p>0
\end{array}\right..
$$

\begin{prop}\label{ssdivisor}
Let $D_\bullet=\{D^{(p)}, j_{p,k}\}\lra D$ be the canonical hyperresolution of a simple normal crossings divisor $D$.
The multiplicative weight spectral sequence of $D$ is given by
 $$E_1^{p,q}(D)\cong \int_\alpha  H^{q}(D^{(\alpha+1)};\QQ)\otimes \Omega_\alpha^p\Longrightarrow H^{p+q}(D;\QQ).$$
\end{prop}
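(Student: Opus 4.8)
The plan is to realize the mixed Hodge diagram $\Aa(D)$ as a Thom-Whitney simple of the cosimplicial diagram attached to the hyperresolution, and then to read off its weight spectral sequence directly from Lemma \ref{sssimple}. First I would apply the functor of Theorem \ref{extensionavarro} levelwise to the simplicial resolution $D_\bullet\to D$, whose component in simplicial degree $\alpha$ is the smooth projective variety $D^{(\alpha+1)}$, with face maps induced by the restrictions $j_{\bullet,k}$. This produces a strict cosimplicial mixed Hodge diagram $\Aa(D^{(\bullet+1)})$, which is \emph{finite} because $D^{(\alpha+1)}=\emptyset$ for $\alpha+1>n$. By Lemma \ref{TWisMHD} its Thom-Whitney simple $s_{\TW}(\Aa(D^{(\bullet+1)}))$ is again a mixed Hodge diagram; since $D_\bullet\to D$ satisfies cohomological descent and $s_{\TW}$ computes the total complex up to homotopy, its rational component, equipped with the filtration $W(1)$ of Definition \ref{filTW}, is identified with $(\Aa(D)_\QQ,W)$. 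This is precisely the filtered cdga whose spectral sequence defines the multiplicative weight spectral sequence $E_1(D)$.

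With this identification the computation becomes a direct application of Lemma \ref{sssimple} to the cosimplicial filtered cdga $A^\bullet:=\Aa(D^{(\bullet+1)})_\QQ$. Each $D^{(\alpha+1)}$ is smooth and projective, so its weight filtration is trivial, and by the smooth projective case recalled in Subsection \ref{seccioexemples} one has $E_1^{0,q}(A^\alpha,W)=H^q(D^{(\alpha+1)};\QQ)$ together with $E_1^{s,q}(A^\alpha,W)=0$ for every $s\neq 0$. Substituting this into the formula of Lemma \ref{sssimple},
$$E_1^{p,q}(s_{\TW}(A^\bullet),W(1))\cong\int_\alpha\bigoplus_m\left(E_1^{p-m,q}(A^\alpha,W)\otimes\Omega_\alpha^m\right),$$
only the summand with $p-m=0$, that is $m=p$, survives, and the end collapses to $\int_\alpha H^q(D^{(\alpha+1)};\QQ)\otimes\Omega_\alpha^p$. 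This is exactly the asserted expression for $E_1^{p,q}(D)$.

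It then remains only to record the abutment: the convergence $E_1^{p,q}(D)\Rightarrow H^{p+q}(D;\QQ)$ is built into the definition of the multiplicative weight spectral sequence, since $\Aa(D)_\QQ\simeq\Aa_{pl}(D)$ computes $H^*(D;\QQ)$, and forgetting the algebra structure this spectral sequence is homotopy equivalent to Deligne's $\wt E_1(D)$ described above. I expect the only genuinely delicate point to be the first step, namely the precise identification of $(\Aa(D)_\QQ,W)$ with the Thom-Whitney simple of $\Aa(D^{(\bullet+1)})_\QQ$ carrying the filtration $W(1)$; this rests on Navarro-Aznar's cohomological-descent construction of mixed Hodge diagrams for simplicial varieties and on the fact that the shift built into $\sigma[1]$ (with $dt_i$ of weight $-1$) implements the décalage relating the naive filtration to Deligne's weight filtration. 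Everything after that identification is a formal consequence of Lemma \ref{sssimple} and the vanishing supplied by the smooth projective case.
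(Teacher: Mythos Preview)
Your proposal is correct and follows essentially the same route as the paper: construct $\Aa(D)$ as the Thom-Whitney simple of the cosimplicial mixed Hodge diagram $\{\Aa(D^{(p)}),j_{p,k}^*\}$ with trivial weight filtrations, invoke Lemma~\ref{TWisMHD} and cohomological descent to identify it with a mixed Hodge diagram for $D$, and then apply Lemma~\ref{sssimple} together with the vanishing $E_1^{p,q}(D^{(\alpha)})=0$ for $p>0$ coming from the smooth projective case. The paper adds only the remark that, forgetting multiplicative structures, this recovers the canonical mixed Hodge complex for $D$ and hence Deligne's mixed Hodge structure, which you also note in passing.
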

\begin{proof}
Since $D^{(p)}$ is smooth and projective for each $p$, there is a strict cosimplicial 
mixed Hodge diagram $\Aa(D_\bullet):=\{\Aa(D^{(p)}),j_{p,k}^*\}$ with trivial weight filtrations. A
mixed Hodge diagram for $D$ is then given by the Thom-Whitney simple
$\Aa(D):=s_{\TW}(\Aa(D_\bullet))$. Indeed,
cohomological descent for rational homotopy gives a quasi-isomorphism $\Aa(D)_\QQ\simeq \Aa_{pl}(X)$. 
Furthermore, the filtrations of $\Aa(D)$ given in Lemma $\ref{TWisMHD}$ make $\Aa(D)$ into a mixed Hodge diagram.
By Lemma $\ref{TWisMHD}$, when forgetting the multiplicative structures, $\Aa(D)$ is quasi-isomorphic to the canonical mixed Hodge complex for $D$ 
(see also $\S4$ of \cite{GS}). Hence it induces Deligne's mixed Hodge structure on the cohomology of $D$.
The result follows from Lemma
$\ref{sssimple}$, since $E_1^{0,q}(D^{(\alpha)})=H^q(D^{(\alpha)};\QQ)$ and $E_1^{p,q}(D^{(\alpha)})=0$ for $p>0$.
\end{proof}

\begin{example}Let $D$ be a simple normal crossings divisor of complex dimension $1$. Then 
$D^{(1)}$ is a disjoint union of smooth projective curves, $D^{(2)}$ is a collection of points and
$D^{(p)}=\emptyset$ for all $p>2$.
Deligne's weight spectral sequence is given by
$$
\wt E_1^{*,*}(D)\cong
\def\arraystretch{1.5}
\begin{array}{|ccc}
H^2(D^{(1)};\QQ)&&\\
H^1(D^{(1)};\QQ)&&\\
H^0(D^{(1)};\QQ)&\xra{\,\,\,j_2^*\,\,\,}& H^0(D^{(2)};\QQ)\\
\hline
\end{array}
$$
where $j_2^*:=j_{2,1}^*-j_{2,2}^*$.
The multiplicative weight spectral sequence is given by
$$
E_1^{*,*}(D)\cong
\def\arraystretch{1.5}
\begin{array}{|ccc}
H^2(D^{(1)};\QQ)&&\\
H^1(D^{(1)};\QQ)&&\\
E_1^{0,0}(D) &\xra{\,\,d_1\,\,}& H^0(D^{(2)};\QQ)\Lambda(t)dt\\
\hline
\end{array}
$$
where the term $E_1^{0,0}(D)$ is given by the pull-back
$$
\xymatrix{\pb
\ar[d]
E_1^{0,0}(D)\ar[rr]&&H^0(D^{(2)};\QQ)\otimes\Lambda(t)\ar[d]^{(\delta^0,\delta^1)}\\
H^0(D^{(1)};\QQ)\ar[rr]^-
{(j_{2,1}^*,j_{2,2}^*)}&&H^0(D^{(2)};\QQ)\times H^0(D^{(2)};\QQ)
}
$$
and the differential $d_1:E_1^{0,0}(D)\to E_1^{1,0}(D)$ is given by $(a,b(t))\mapsto b'(t)dt$.
The products $E_1^{0,0}\times E_1^{0,k}\to E_1^{0,k}$ and $E_1^{0,0}\times E_1^{1,0}\to E_1^{1,0}$
are given by $(a,b(t))\cdot c=a\cdot c$ and $(a,b(t))\cdot c(t)dt=b(t)\cdot c(t)dt$ respectively.
The unit is $(1_{D^{(1)}},1_{D^{(2)}})\in E_1^{0,0}(D)$.
The maps $H^0(D^{(1)};\QQ)\lra E_1^{0,0}$ and $H^0(D^{(2)};\QQ)\lra E_1^{1,0}$
given by $a\mapsto (a, j_{21}^*(a)(1-t)+j_{22}^*(a)t)$ and $b\mapsto -b\cdot dt$
define an inclusion $\wt E_1(D)\lra E_1(D)$ of bigraded complexes which is a quasi-isomorphism.
\end{example}

\subsubsection*{Smooth quasi-projective varieties}
Let $X$ be a smooth projective variety and let $X\hookrightarrow \wt X$ be a smooth compactification
of $X$ such that the complement $D:=\wt X-X$ is a union of divisors with normal crossings.
A mixed Hodge diagram for $X$ is defined via the 
algebra of forms on $\wt X$ which have logarithmic poles along $D$ (see $\S3$ of \cite{Mo} for details, see also $\S8$ of \cite{Na}). 
In this case, the multiplicative weight spectral sequence
coincides with Deligne's weight spectral sequence, given by
$$E_1^{-p,q}(X)=H^{q-2p}(D^{(p)};\QQ)\Longrightarrow H^{q-p}(X;\QQ).$$
The differential $d_1^{-p,*}:E_1^{-p,*}(X)\lra E_1^{-p+1,*}(X)$
is given by the combinatorial Gysin map
$$\gamma_{p}:=\sum_{k=1}^p (-1)^{k} (j_{p,k})_{!}:H^{*-2}(D^{(p)};\QQ)\lra H^*(D^{(p-1)};\QQ).$$
where $j_{p,k}:D^{(p)}\hookrightarrow D^{(p-1)}$ are the inclusion maps.
The algebra structure of $E_1^{*,*}(X)$ is induced by the combinatorial restriction morphisms
$j_{p}^*=\sum_{k=1}^p (-1)^{k-1} (j_{p,k})^{*}$ together with the cup product of $H^*(D^{(p)};\QQ)$, for $p>0$.

\subsubsection*{Isolated singularities}
Let $X$ be a complex projective variety of dimension $n$ with only isolated singularities and denote by $\Sigma$ the singular locus of $X$.
By Hironaka's Theorem on resolution of singularities
there 
exists a cartesian diagram
$$
\xymatrix{
D\ar[r]^j\ar[d]_g&\wt X\ar[d]^f\\
\Sigma\ar[r]^i&X
}
$$
where $\wt X$ is smooth, $f:\wt X\to X$ is a proper birational morphism which is an isomorphism outside
$\Sigma$ and  $D=f^{-1}(\Sigma)$ is a simple normal crossings divisor.
Since both $\wt X$ and $\Sigma$ are smooth and projective, there are mixed Hodge diagrams $\Aa(\wt X)$ and $\Aa(\Sigma)$ for $\wt X$ and $\Sigma$ respectively
with trivial weight filtration. Let $\Aa(D)=s_{\TW}(\Aa(D_\bullet))$ be a mixed Hodge diagram for $D$ as constructed in
the proof of Proposition $\ref{ssdivisor}$.
The maps $j_1:D^{(1)}\to \wt X$ and $g_1:D^{(1)}\to \Sigma$ defined by composing the map $D^{(1)}\to D$ with $j$ and $g$ respectively
induce morphisms of mixed Hodge diagrams 
$j_1^*:\Aa(\wt X)\to \Aa(D)$ and $g_1^*:\Aa(\Sigma)\to \Aa(D)$.
\begin{prop}\label{isolsingE1}
Let $X$ be a complex projective variety with only isolated singularities.
With the above notation we have:
\begin{enumerate}[(1)]
 \item A mixed Hodge diagram for $X$ is given by 
$\Aa(X)=s_{\TW}\left(\Aa(\Sigma)\times \Aa(\wt X)\rightrightarrows\Aa(D)\right).$
\item For $q\geq 0$, the term $E_1^{0,q}(X)$ is given by the pull-back
$$
\xymatrix{\pb
\ar[d]
E_1^{0,q}(X)\ar[rr]&&E_1^{0,q}(D)\otimes\Lambda(t)\ar[d]^{(\delta^0,\delta^1)}\\
H^q(\Sigma;\QQ)\times H^q(\wt X;\QQ)\ar[rr]^-
{\left(\begin{smallmatrix}
g_1^*&0\\
0&j_1^*
\end{smallmatrix}\right)
}&&E_1^{0,q}(D)\times E_1^{0,q}(D)
},
$$
while for $p>0$ and $q\geq 0$ we have:
$$E_1^{p,q}(X)\cong E_1^{p,q}(D)\otimes\Lambda(t)\oplus E_1^{p-1,q}(D)\otimes\Lambda(t)dt.$$
The differential of $E_1^{*,*}(X)$ is defined component-wise, via the differential of $E_1^{*,*}(D)$ and the differentiation with respect to $t$.
\item Denote by $j_p^*:H^*(D^{(p-1)};\QQ)\lra H^*(D^{(p)};\QQ)$ the combinatorial restriction maps, with $D^{(0)}=\wt X$.
Let $\tau:H^0(\wt X;\QQ)\times H^0(\Sigma;\QQ)\lra H^0(D^{(1)};\QQ)$ be given by
$\tau(x,\sigma)=j_1^*(x)-g_1^*(\sigma)$. 
Then
$$E_2^{p,q}(X)\cong
\def\arraystretch{1.5}
\begin{array}{| c | c | c }
\Ker(j_1^*)&\Ker(j_2^*)/\Img(j_1^*)&\Ker(j_{p+1}^*)/\Img(j_p^*)\\\hline
\Ker(\tau)&\Ker(j_2^*)/\Img(\tau)&\Ker(j_{p+1}^*)/\Img(j_p^*)\\\hline
\multicolumn{1}{c}{\text{\tiny{$p=0$}}}&\multicolumn{1}{c}{\text{\tiny{$p=1$}}}&\multicolumn{1}{c}{\text{\tiny{$p\geq 2$}}}
\end{array}
\def\arraystretch{1.5}
\begin{array}{c}
\text{\tiny{$q\geq 1$}}\\
\text{\tiny{$q=0$}}\\
\\
\end{array} $$
\end{enumerate}
\end{prop}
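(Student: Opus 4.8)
The plan is to realize the mixed Hodge diagram of $X$ via cohomological descent for the given square, then to read off $E_1$ and $E_2$ from the Thom--Whitney machinery. First I would observe that the displayed cartesian square is an elementary acyclic (abstract blow-up) square: $f$ is proper and restricts to an isomorphism over $X\setminus\Sigma$, and $D=f^{-1}(\Sigma)$. Such squares are of cohomological descent (the basic descent datum on $\mathbf{Sch}$, cf. \cite{DeHIII}, \cite{PS}), so $\Aa_{pl}(X)$ is computed as the homotopy fibre product of $\Aa(\wt X)$ and $\Aa(\Sigma)$ over $\Aa(D)$ along $j_1^*$ and $g_1^*$. By Example \ref{doublemappingpath} this homotopy fibre product is exactly $s_{\TW}(\Aa(\Sigma)\times\Aa(\wt X)\rightrightarrows\Aa(D))_\QQ$, with structure maps $(\sigma,x)\mapsto g_1^*(\sigma)$ and $(\sigma,x)\mapsto j_1^*(x)$, giving $\Aa(X)_\QQ\simeq\Aa_{pl}(X)$. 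Since $\Aa(\Sigma)$, $\Aa(\wt X)$, $\Aa(D)$ are mixed Hodge diagrams and $g_1^*$, $j_1^*$ are morphisms of such, Lemma \ref{TWisMHD} (applied to the $1$-truncated cosimplicial diagram defined by the two maps) makes $s_{\TW}(\cdots)$ a mixed Hodge diagram; that its cohomology is Deligne's mixed Hodge structure follows, after forgetting multiplicative structures, by comparing with the canonical mixed Hodge complex of the same acyclic square (the simple of $\Aa(\wt X)\oplus\Aa(\Sigma)\to\Aa(D)$), exactly as in the proof of Proposition \ref{ssdivisor}. This establishes (1).

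For the explicit $E_1$ term I would feed the diagram into Lemma \ref{sssimple}. As $\Sigma$ and $\wt X$ are smooth projective their weight filtrations are trivial, so $E_1^{0,q}(\Sigma)=H^q(\Sigma;\QQ)$, $E_1^{0,q}(\wt X)=H^q(\wt X;\QQ)$ and both vanish for $p>0$; for $D$ I use the description of $E_1^{*,*}(D)$ from Proposition \ref{ssdivisor}. With $A^0=\Aa(\Sigma)\times\Aa(\wt X)$ and $A^1=\Aa(D)$, the formula $E_1^{p,q}(X)=\int_\alpha\bigoplus_m E_1^{p-m,q}(A^\alpha,W)\otimes\Omega_\alpha^m$ applies, and since $\Omega_0=\QQ$ while $\Omega_1=\Lambda(t,dt)$ splits as $\Omega_1^0=\Lambda(t)$ and $\Omega_1^1=\Lambda(t)dt$, the end over the two-term diagram reduces to an equalizer of the evaluations $\delta^0,\delta^1$ (at $t=0,1$) against $g_1^*$ and $j_1^*$. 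In total first-degree $0$ the $\alpha=0$ slot contributes $H^q(\Sigma;\QQ)\oplus H^q(\wt X;\QQ)$ and the equalizer is the stated pull-back; for $p>0$ the $\alpha=0$ slot vanishes, leaving the two summands $E_1^{p,q}(D)\otimes\Lambda(t)$ and $E_1^{p-1,q}(D)\otimes\Lambda(t)dt$. The internal differential is likewise read off: it is the differential of $E_1(D)$ on each tensor factor together with the de Rham differential $\omega(t)\mapsto\omega'(t)\,dt$ coming from $\Omega_\bullet$. This gives (2).

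To obtain (3) I would compute the cohomology of $(E_1^{\bullet,q}(X),d_1)$. The cleanest route uses that, forgetting the algebra structure, $E_1(X)$ is homotopy equivalent as a bigraded complex to Deligne's weight spectral sequence $\wt E_1(X)$, so $E_2(X)\cong\wt E_2(X)$; and $\wt E_1^{\bullet,q}(X)$ attached to the acyclic square is the combinatorial weight complex
$$H^q(\wt X;\QQ)\oplus H^q(\Sigma;\QQ)\xrightarrow{\ \tau\ }H^q(D^{(1)};\QQ)\xrightarrow{\ j_2^*\ }H^q(D^{(2)};\QQ)\xrightarrow{\ j_3^*\ }\cdots$$
with $\tau=j_1^*-g_1^*$ and $j_p^*$ the combinatorial restrictions. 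Taking cohomology position by position yields the table; the two rows differ only because $H^q(\Sigma;\QQ)=0$ for $q\geq 1$ (here $\Sigma$ is a finite set of points), which collapses $\tau$ to $j_1^*$ and removes $\Sigma$ from $\Ker(\tau)$ and $\Img(\tau)$. The step I expect to be most delicate is the alternative, purely multiplicative computation of $H(E_1,d_1)$ from (2): there $E_1^{0,q}(D)$ is itself a Thom--Whitney end, so the complex carries nested copies of $\Omega_\bullet$, and extracting the kernels and images requires using the acyclicity of the de Rham complex $(\Lambda(t)\xrightarrow{d/dt}\Lambda(t)dt)$ to eliminate the polynomial variables while carefully tracking the gluing at $p=0,1$, where $g_1^*$, $j_1^*$ (hence $\tau$) enter. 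Showing that this de Rham differentiation collapses the $\Lambda(t,dt)$-directions precisely onto the combinatorial complex above — with $\tau$ produced at the $p=0\to 1$ step — is the main obstacle, and it is exactly what the comparison with $\wt E_1(X)$ allows one to bypass.
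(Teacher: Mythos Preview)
Your proposal is correct and follows essentially the same approach as the paper: cohomological descent for the blow-up square together with Lemma~\ref{TWisMHD} for (1), Lemma~\ref{sssimple} for (2), and a computation for (3). The only minor difference is that for (3) the paper simply says ``a matter of verification'' (i.e.\ compute directly from the description in (2)), whereas you first pass through the additive comparison $E_2(X)\cong\wt E_2(X)$ before computing; both are fine and yield the same table.
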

\begin{proof}
By Lemma $\ref{TWisMHD}$ the Thom-Whitney simple of a strict cosimplicial mixed Hodge diagram,
with the filtrations $W(1)$ and $F(0)$,
is a mixed Hodge diagram. Hence $\Aa(X)$ is a mixed Hodge diagram, which by cohomological descent satisfies
$\Aa(X)_\QQ\simeq \Aa_{pl}(X)$. By forgetting the multiplicative structures, we obtain a mixed Hodge complex for $X$
(see for example 2.9 of \cite{Durfee}).
This proves (1). Assertion (2) now follows from (1) and Lemma
$\ref{sssimple}$. Assertion (3) is a matter of verification.
\end{proof}

\section{Purity implies formality}\label{Spurity}
In this section we give a simple proof of the fact that the purity of the weight filtration of a complex projective variety
up to a certain degree, implies formality of the variety up to the same degree. A direct application is the formality of 
the Malcev completion of the fundamental group of projective varieties with normal isolated singularities.

\begin{defi}Let $r\geq 0$ be an integer.
A morphism of cdga's $f:A\to B$ is called \textit{$r$-quasi-isomorphism} if the induced morphism in cohomology $H^i(f):H^i(A)\to H^i(B)$ is an isomorphism 
for all $i\leq r$ and a monomorphism for $i=r+1$.
\end{defi}

\begin{defi}
A cdga $(A,d)$  over $\kk$ is called \textit{$r$-formal} if there is a string of $r$-quasi-isomorphisms from $(A,d)$
to its cohomology $(H^*(A;\kk),0)$ considered as a cdga with trivial differential.
We will say that a topological space $X$ is \textit{$r$-formal}
if the rational cdga $\Aa_{pl}(X)$ is $r$-formal.
\end{defi}

The case $r=1$ is of special interest, since 1-formality implies that the rational Malcev completion of
$\pi_1(X)$ can be computed directly from
the cohomology group $H^1(X;\QQ)$, together with the cup product $H^1(X;\QQ)\otimes H^1(X;\QQ)\to H^2(X;\QQ)$.
In this case we say that \textit{$\pi_1(X)$ is formal}.
For $r=\infty$ we recover the usual notion of formality, which in the case of simply connected (or more generally, nilpotent) spaces, implies 
that the higher rational homotopy groups $\pi_i(X)\otimes\QQ$, with $i>1$ can be computed directly from the cohomology ring $H^*(X;\QQ)$.
Note that if $X$ is formal, then $\pi_1(X)$ is also formal.

\begin{teo}\label{purityformality}
Let $X$ be a complex projective variety and let $r\geq 0$ be an integer.
If the weight filtration on $H^k(X;\QQ)$ is pure of weight $k$, for all $0\leq k\leq r$, then $X$ is $r$-formal.
\end{teo}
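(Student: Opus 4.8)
The plan is to transport the problem to the multiplicative weight spectral sequence and there exploit the extra grading coming from mixed Hodge theory. First I would reduce $r$-formality of $X$ to $r$-formality of the bigraded cdga $(E_1(X),d_1)$. Since $X$ is projective, the cohomology $H^*(X;\QQ)$ has finite type, so the finite-type (rational) version of Theorem $\ref{E1formal}$ provides a string of $E_1$-quasi-isomorphisms between $(\Aa(X)_\QQ,W)$ and $(E_1(X),d_1)$. An $E_1$-quasi-isomorphism is in particular a quasi-isomorphism of the underlying cdga's, since $W$ is regular and the associated spectral sequences converge; and $\Aa(X)_\QQ\simeq\Aa_{pl}(X)$ by Theorem $\ref{extensionavarro}$. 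As $r$-formality is invariant under quasi-isomorphism (a quasi-isomorphism is an $r$-quasi-isomorphism for every $r$, and the induced isomorphism $H^*(\Aa_{pl}(X))\cong H^*(E_1(X))$ identifies the two target cohomology cdga's), it suffices to prove that $(E_1(X),d_1)$ is $r$-formal.

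Next I would read off the relevant weight grading and translate the purity hypothesis. The differential $d_1$ has bidegree $(1,0)$ and the product satisfies $E_1^{p,q}(X)\cdot E_1^{p',q'}(X)\subseteq E_1^{p+p',q+q'}(X)$, so the second index $q$ endows the cdga $E_1(X)$ with a grading preserved by $d_1$ and additive under products. By axiom $(\mathrm{MH}_2)$ applied to the mixed Hodge diagram $\Aa(X)$, each $E_1^{p,q}(X)$ underlies a pure Hodge structure of weight $q$, so this $q$-grading is precisely the Hodge-theoretic weight (note that the spectral sequence is indexed so that $d_1$ preserves the weight, as a morphism of Hodge structures must). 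Because the weight spectral sequence degenerates at the second stage, $E_2^{p,q}(X)$ is the weight-$q$ graded piece of $H^{p+q}(X;\QQ)$; hence the hypothesis that $H^k(X;\QQ)$ be pure of weight $k$ for all $k\le r$ is equivalent to the vanishing of $E_2^{p,q}(X)$ for $p\neq 0$ with $p+q\le r$. Equivalently, $H^n(E_1(X),d_1)$ is concentrated in weight $q=n$ for $n\le r$: the cohomology of $E_1(X)$ is pure of weight equal to its degree, up to degree $r$.

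The core is then a purity-implies-formality statement for weighted cdga's: if a cdga $B$ over $\QQ$ carries a grading preserved by the differential and additive under products, and $H^n(B)$ is pure of weight $n$ for all $n\le r$, then $B$ is $r$-formal. I would apply this to $B=(E_1(X),d_1)$ with the $q$-grading. To prove it I would use the grading to produce, for each $\lambda\in\QQ^{\times}$, a cdga automorphism $\psi_\lambda$ acting by $\lambda^q$ on $E_1^{*,q}(X)$; purity forces $\psi_\lambda$ to act by $\lambda^n$ on $H^n$ in degrees $n\le r$. More constructively, I would lift the weight grading to a minimal model $\rho\colon(\Lambda V,d)\to E_1(X)$ compatible with the grading, with $d$ homogeneous of weight $0$, and show by induction on degree (up to $r$) that purity allows one to choose generators whose weight equals their degree modulo decomposable and exact terms; projecting away the weight-excess part then yields a formality $r$-quasi-isomorphism $(\Lambda V,d)\to(H^*(E_1(X)),0)$.

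The main obstacle is precisely this last lemma in its truncated form: running the weight-compatible minimal-model induction so that it terminates cleanly at degree $r$, in particular securing the monomorphism condition in degree $r+1$, and keeping the weight bookkeeping consistent—here the only gradings compatible with $d_1$ are multiples of the second index, yet the purity input must be used in the form ``weight equals degree''. By contrast, the reductions in the first two paragraphs are formal, and the Hodge-theoretic content enters only through the single assertion that the $q$-grading on $E_1(X)$ computes the weight filtration; everything downstream is an argument about weighted cdga's over $\QQ$.
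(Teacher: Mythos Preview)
Your reduction to the $r$-formality of $(E_1(X),d_1)$ and your translation of purity into the vanishing of $E_2^{p,q}(X)$ for $p\neq 0$ and $p+q\le r$ are exactly what the paper does (the paper works over $\CC$ via Theorem~\ref{E1formal} as stated and then descends to $\QQ$, rather than invoking the finite-type rational version, but this is immaterial).

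Where you diverge is in the ``core'' step. You propose to prove a general purity-implies-formality lemma for weighted cdga's via grading automorphisms or weight-compatible minimal models, and you correctly flag the truncated induction as the main obstacle. The paper bypasses this entirely by exploiting one extra feature you did not use: since $X$ is \emph{projective}, the multiplicative weight spectral sequence is first-quadrant, i.e.\ $E_1^{p,q}(X)=0$ for $p<0$. Consequently $E_2^{0,q}(X)=\Ker(d_1^{0,q})$ on the nose (no image from the left to quotient by), and the bigraded subspace
\[
M^{0,q}:=\Ker(d_1^{0,q}),\qquad M^{p,q}:=0\ \text{for }p>0,
\]
is already a sub-cdga of $E_1(X)$ with trivial differential (kernels multiply into kernels). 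The inclusions $M\hookrightarrow E_1(X)$ and $M\hookrightarrow E_2(X)$ are then $r$-quasi-isomorphisms directly from the purity hypothesis: in total degree $k\le r$ one has $H^k\cong E_2^{0,k}=M^{0,k}$, and in degree $r+1$ both maps are inclusions of the summand $E_2^{0,r+1}$ into $\bigoplus_p E_2^{p,r+1-p}$, hence monomorphisms. No minimal model, no inductive bookkeeping.

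So your approach is viable but strictly harder than necessary: the general weighted-cdga lemma you aim for is true but genuinely requires the minimal-model argument you sketch, whereas the projectivity hypothesis (through the first-quadrant property) hands you the intermediate object $M$ for free. What the paper's route buys is a two-line proof of the step you identified as the obstacle; what your route would buy, if completed, is a statement that also covers the non-projective case with pure low-degree cohomology.
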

\begin{proof}
We prove formality over $\CC$ and apply independence of formality on the base field for cdga's with 
cohomology of finite type (see \cite{Su}, see also \cite{HS}).
Since the disjoint union of $r$-formal spaces is $r$-formal, we may assume that $X$ is
connected, so it has a mixed Hodge diagram $\Aa(X)$ with $H^0(\Aa(X)_\QQ)\cong \QQ$.
By Theorem $\ref{E1formal}$ it suffices to define a string of $r$-quasi-isomorphisms 
of differential bigraded algebras from $(E_1^{*,*}(X),d_1)$ to $(E_2^{*,*}(X),0)$.
Let $M$ be the bigraded vector space given by $M^{0,q}=\Ker(d_1^{0,q})$ for all $q\geq 0$ and $M^{p,q}=0$ for all $p>0$ and all $q\geq 0$,
where $d_1^{p,q}:E_1^{p,q}(X)\to E_1^{p+1,q}(X)$ denotes the differential of $E_1^{*,*}(X)$.
Then $M^{*,*}$ is a bigraded sub-complex of $(E_1^{*,*}(X),d_1)$ with trivial differential. Denote by $\varphi:(M,0)\to (E^{*,*}_1(X),d_1)$ the inclusion.
Since $\Ker(d_1^{0,*})\times \Ker(d_1^{0,*})\subset \Ker(d_1^{0,*})$,
the multiplicative structure induced by $\varphi$ on $M$ is closed in $M$.
Hence $\varphi$ is an inclusion of differential bigraded algebras.
On the other hand, we have $E_2^{0,q}(X)\cong M^{0,q}=\Ker(d_1^{0,q})$. This gives an inclusion of bigraded algebras
$\psi:M\to E^{*,*}_2(X)$.
Assume that the weight filtration on $H^k(X;\QQ)$ is pure of weight $k$, for all $k\leq r$.
We next show that both $\varphi$ and $\psi$ are $r$-quasi-isomorphisms. Indeed, 
for every $p>0$ and every $q\geq 0$ such that $p+q\leq r$ we have
$E_2^{p,q}(X)=0$, while for every $q\geq 0$,
$E_2^{0,q}(X)\cong M^{0,q} \cong H^q(X;\QQ)$.
Therefore the induced maps $H^k(\varphi)$ and $H^k(\psi)$ are isomorphisms for all $k\leq r$
and the maps $H^{k+1}(\varphi)$ and $H^{k+1}(\psi)$ are monomorphisms.
\end{proof}

We highlight the two extreme cases $r=1$ and $r=\infty$ in the following corollary. 
\begin{cor}\label{corolari}
 Let $X$ be a complex projective variety.
 \begin{enumerate}[(1)]
  \item If the weight filtration on $H^1(X;\QQ)$ is pure of weight $1$, then $\pi_1(X)$ is formal.
  \item  If the weight filtration on $H^k(X;\QQ)$ is pure of weight $k$, for all $k\geq 0$ then $X$ is formal.
 \end{enumerate}
  \end{cor}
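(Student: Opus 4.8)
The plan is to obtain both parts as the two boundary instances $r=1$ and $r=\infty$ of Theorem \ref{purityformality}, so that essentially all the work has already been carried out there; what remains is to match the hypotheses and to read off the meaning of $r$-formality in the two extreme cases.

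For part (1) I would apply Theorem \ref{purityformality} with $r=1$. The only thing to check is that the single hypothesis on $H^1$ already delivers purity in the full range $0\leq k\leq 1$: indeed $H^0(X;\QQ)$ is automatically pure of weight $0$, since for a projective variety one has $W_{-1}H^0(X;\QQ)=0$ and $W_0H^0(X;\QQ)=H^0(X;\QQ)$. Hence purity of $H^1(X;\QQ)$ of weight $1$ together with this automatic purity of $H^0$ gives the hypothesis of the theorem for $r=1$, so $X$ is $1$-formal. By the terminology fixed just before Theorem \ref{purityformality}, $1$-formality of $X$ is precisely the statement that $\pi_1(X)$ is formal, which closes (1).

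For part (2) I would invoke Theorem \ref{purityformality} for all finite $r$ simultaneously, stressing that the string of $r$-quasi-isomorphisms constructed in its proof is independent of $r$: it is the fixed pair of maps $(M,0)\xra{\varphi}(E_1(X),d_1)$ and $(M,0)\xra{\psi}(E_2(X),0)$, shown to be $r$-quasi-isomorphisms exactly when purity holds up to degree $r$. Under the global purity hypothesis of (2), these maps are $r$-quasi-isomorphisms for every $r$, hence genuine quasi-isomorphisms of differential bigraded algebras. Composing them with the string of $E_1$-quasi-isomorphisms from $(\Aa(X)_\QQ,W)\otimes\CC$ to $E_1(X)\otimes\CC$ provided by Theorem \ref{E1formal}, and observing that full purity collapses $E_2(X)$ onto its $p=0$ column so that $E_2(X)\cong H^*(X;\QQ)$ with trivial differential, yields a string of quasi-isomorphisms from $\Aa_{pl}(X)\otimes\CC$ to $(H^*(X;\CC),0)$; descent to $\QQ$ then follows from field-independence of formality for cdga's with cohomology of finite type, exactly as in the proof of Theorem \ref{purityformality}. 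Therefore $X$ is formal.

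The one point I expect to require care is the passage from \emph{$r$-formal for every finite $r$} to \emph{formal} in part (2): in general this implication can fail, because the connecting strings witnessing $r$-formality might vary with $r$ and admit no common refinement. The reason the corollary is nevertheless immediate, with no limiting argument needed, is precisely that Theorem \ref{purityformality} is proved through an $r$-independent string $\varphi,\psi$; once purity is global these same two maps simultaneously witness the required isomorphisms in all cohomological degrees, so they are honest quasi-isomorphisms rather than merely truncated ones.
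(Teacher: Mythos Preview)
Your proposal is correct and takes essentially the same approach as the paper, which presents the corollary without proof after remarking that it ``highlight[s] the two extreme cases $r=1$ and $r=\infty$'' of Theorem~\ref{purityformality}. Your added care in part~(2)---noting that the string $\varphi,\psi$ is $r$-independent so that global purity yields genuine quasi-isomorphisms rather than merely $r$-quasi-isomorphisms for each $r$---is a welcome clarification of a point the paper leaves implicit.
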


\begin{example}[$\QQ$-homology manifolds]
Let $X$ be a complex projective variety of dimension $n$.
Assume that $X$ is a $\QQ$-homology manifold (for all $x\in X$, $H^k_{\{x\}}(X;\QQ)=0$ for $k\neq 2n$ and
$H^{2n}_{\{x\}}(X;\QQ)\cong \QQ$). Then the weight filtration on $H^k(X;\QQ)$ is pure of weight $k$, for all $k\geq 0$
(see Theorem 8.2.4 in \cite{DeHIII}). Hence $X$ is formal.
Examples of such varieties are given by weighted projective spaces or more generally $V$-manifolds (see Appendix B of \cite{Di}),
surfaces with $A_1$-singularities, the Cayley cubic or the Kummer surface.
\end{example}

In fact, purity of the weight filtration is strongly related to Poincar\'{e} duality: 
if $X$ is a complex projective variety whose rational cohomology satisfies Poincar\'{e} duality, then
the weight filtration on $H^k(X;\QQ)$ is pure of weight $k$, for all $k\geq 0$.
Indeed, the Poincar\'{e} duality maps
$H^k(X;\QQ)\cong (H^{2n-k}(X;\QQ)^\du)(-n)$ are compatible with mixed Hodge structures.
The weights on the left (resp. right) hand side are $\leq k$ (resp. $\geq k$),
hence equal to $k$.
Therefore such varieties are formal (cf. \cite{HaBull}, Theorem 5). 
Another well-known result relating purity and Poincar\'{e} duality is the purity of the weight filtration
on the (middle perversity) intersection cohomology $IH^*(X;\QQ)$ of a projective variety $X$.
Furthermore, Weber \cite{Weber} showed that for a complex projective variety
$X$, the image of the map $H^k(X;\QQ)\to IH^k(X;\QQ)$ is isomorphic to the pure term $Gr_k^WH^k(X;\QQ)$.

The purity of the weight filtration in cohomology does not imply Poincar\'{e} duality, as
shown by the following example.

\begin{example}[Projective cone over a smooth curve]
Let $C\subset \CC\PP^N$ be a smooth curve of genus $g$ and consider the projective cone $X=P_cC$ over $C$.
The Betti numbers of $X$ are $b_0=1$, $b_1=0$, $b_2=1$, $b_3=2g$ and $b_4=1$.
The weight filtration on $H^k(X;\QQ)$ is pure of weight $k$, for all $k\geq 0$, and hence $X$ is formal, but 
$H^*(X;\QQ)$ does not satisfy Poincar\'{e} duality.
\end{example}

In \cite{ADH}, it is shown that the weight filtration on $H^1(X;\QQ)$ of a normal complex projective variety $X$ is pure of weight 1.
In the case of isolated singularities, the proof is a standard argument in mixed Hodge theory:

\begin{lem}\label{normalpureH1}
Let $X$ be a normal complex projective variety with isolated singularities. Then the weight filtration on $H^1(X;\QQ)$ is pure weight 1.
\end{lem}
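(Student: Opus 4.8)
The plan is to use the resolution of singularities $f\colon\wt X\to X$ together with the cartesian square relating $D$, $\wt X$, $\Sigma$ and $X$ recalled above for projective varieties with isolated singularities. Since $X$ is projective we already have $W_1H^1(X;\QQ)=H^1(X;\QQ)$, so purity of weight $1$ reduces to the single vanishing $W_0H^1(X;\QQ)=0$. I would deduce this from two inputs: that the pullback $f^*\colon H^1(X;\QQ)\to H^1(\wt X;\QQ)$ is injective, and that $H^1(\wt X;\QQ)$ is pure of weight $1$ because $\wt X$ is smooth and projective.

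The injectivity of $f^*$ on $H^1$ is the step where normality is essential. Since $X$ is normal and $f$ is proper and birational, Zariski's Main Theorem gives that $f$ has connected fibres, so $R^0f_*\QQ_{\wt X}\cong\QQ_X$; the low-degree exact sequence of the Leray spectral sequence of $f$ then reads $0\to H^1(X;\QQ)\xra{f^*}H^1(\wt X;\QQ)$, which is exactly the desired injectivity. Alternatively, one can argue with the Mayer--Vietoris sequence of the blow-up square, namely the exact stretch $H^0(\wt X;\QQ)\oplus H^0(\Sigma;\QQ)\xra{\,j^*-g^*\,}H^0(D;\QQ)\to H^1(X;\QQ)\xra{f^*}H^1(\wt X;\QQ)$, where the last map uses $H^1(\Sigma;\QQ)=0$: because $\Sigma$ is a finite set of points and normality makes each fibre $f^{-1}(x)$ connected, $D$ has exactly $\#\Sigma$ connected components and $g^*\colon H^0(\Sigma;\QQ)\to H^0(D;\QQ)$ is an isomorphism; hence $j^*-g^*$ is surjective, the connecting map $H^0(D;\QQ)\to H^1(X;\QQ)$ vanishes, and $f^*$ is again injective.

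To finish, I would use that $f^*$ is a morphism of mixed Hodge structures and therefore strictly compatible with the weight filtration $W$. As $H^1(\wt X;\QQ)$ is pure of weight $1$ we have $W_0H^1(\wt X;\QQ)=0$, so $f^*\big(W_0H^1(X;\QQ)\big)\subseteq W_0H^1(\wt X;\QQ)=0$; by injectivity of $f^*$ this forces $W_0H^1(X;\QQ)=0$, which is the asserted purity. The only genuine obstacle is the first step, the injectivity of $f^*$ on $H^1$: this is precisely where the hypothesis that $X$ is normal enters, through the connectedness of the fibres of the resolution, and the statement fails without it, as the weight-$0$ class on a nodal curve already shows. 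The remaining steps are formal consequences of the strictness of morphisms of mixed Hodge structures recalled in Section~\ref{Sweight}.
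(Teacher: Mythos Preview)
Your proof is correct and follows essentially the same route as the paper: the paper uses the Mayer--Vietoris sequence of the resolution square together with Zariski's Main Theorem (in the form $H^0(D;\QQ)\cong H^0(\Sigma;\QQ)$) to obtain the injection $f^*\colon H^1(X;\QQ)\hookrightarrow H^1(\wt X;\QQ)$, and then concludes by strict compatibility with $W$ and the purity of $H^1(\wt X;\QQ)$. Your Leray-spectral-sequence variant for the injectivity step is a harmless alternative to the same end.
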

\begin{proof}
Let $\Sigma$ denote the singular locus of $X$ and $f:\wt X\to X$ a resolution such that $D:=f^{-1}(\Sigma)$ is a
simple normal crossings divisor.
Since $X$ is normal, by Zariski's main Theorem
we have $H^0(D;\QQ)\cong H^0(\Sigma;\QQ)$. Since $\dim\Sigma=0$, $H^k(\Sigma)=0$ for all $k>0$. This gives a Mayer-Vietoris long exact sequence
$$0\to H^1(X;\QQ)\xra{f^*}H^1(\wt X;\QQ)\xra{j^*}H^1(D;\QQ)\lra H^2(X;\QQ)\xra{f^*}\cdots $$
which is strictly compatible with the weight filtration (see for example 
Corollary-Definition 5.37 of \cite{PS}).
Since $\wt X$ is smooth and projective, its weight filtration is pure.
Hence the weight filtration on $H^1(X;\QQ)$ is pure weight 1.
\end{proof}

\begin{cor}\label{H1formal}The fundamental group $\pi_1(X)$ of every
normal complex projective variety $X$ with isolated singularities is formal.
\end{cor}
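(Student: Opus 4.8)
The plan is simply to combine the two results that immediately precede the statement. First I would invoke Lemma \ref{normalpureH1}, which guarantees that for a normal complex projective variety $X$ with isolated singularities the weight filtration on $H^1(X;\QQ)$ is pure of weight $1$. This is the only place where the hypotheses of normality and isolated singularities are actually used: normality forces $H^0(D;\QQ)\cong H^0(\Sigma;\QQ)$ by Zariski's main theorem, and the vanishing of the higher cohomology of the zero-dimensional locus $\Sigma$ produces a weight-strict long exact sequence that sandwiches $H^1(X;\QQ)$ between the pure group $H^1(\wt X;\QQ)$ of the smooth projective resolution and the image $j^*H^1(\wt X;\QQ)\subset H^1(D;\QQ)$, forcing purity in degree $1$.

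Second I would feed this purity statement directly into Corollary \ref{corolari}(1), which asserts that purity of the weight filtration on $H^1(X;\QQ)$ already implies that $\pi_1(X)$ is formal. Unwinding the definitions, this is the case $r=1$ of Theorem \ref{purityformality}: purity in degrees $\leq 1$ yields $1$-formality of $X$, and $1$-formality is exactly the condition under which the rational Malcev completion of $\pi_1(X)$ is computed from $H^1(X;\QQ)$ together with the cup product $H^1(X;\QQ)\otimes H^1(X;\QQ)\to H^2(X;\QQ)$. Chaining the two implications gives the claim.

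Since both ingredients are already in place, there is essentially no obstacle to overcome; the only verification needed is that the hypotheses of Lemma \ref{normalpureH1} and of Corollary \ref{corolari}(1) match those in the statement, which they do verbatim. I would therefore present the corollary as a one-line deduction: the weight filtration on $H^1(X;\QQ)$ is pure of weight $1$ by Lemma \ref{normalpureH1}, and hence $\pi_1(X)$ is formal by Corollary \ref{corolari}(1).
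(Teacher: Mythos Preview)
Your proposal is correct and matches the paper's approach exactly: the corollary is stated without proof because it is the immediate concatenation of Lemma~\ref{normalpureH1} and Corollary~\ref{corolari}(1), precisely as you describe.
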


\section{Formality of Projective varieties with isolated singularities}\label{Sisolated}
By purely topological reasons we know that every simply connected, 4-dimensional CW-complex
is formal. We also know there exist non-formal 4-dimensional CW-complexes. 
As mentioned in the introduction, thanks to deep results of
Simpson and Kapovich-Koll\'{a}r we know that there exist non-formal complex projective surfaces.
In this section we prove that every complex projective surface with normal singularities is formal.
We generalize this result in two directions.
First, we prove formality for projective varieties of dimension $n$ with only
isolated singularities whose link is $(n-2)$-connected. Second, we prove formality for those
projective varieties with normal isolated singularities admitting a resolution of singularities
with smooth exceptional divisor.

\begin{teo}\label{formalsurfaces}
Every normal complex projective surface is a formal topological space.
\end{teo}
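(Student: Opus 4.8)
The plan is to apply Corollary \ref{corolari}(2), which reduces formality to showing that the weight filtration on $H^k(X;\QQ)$ is pure of weight $k$ for all $k\geq 0$. Since $X$ is a projective surface, we have $n=\dim(X)=2$ and the relevant cohomology lives in degrees $0\leq k\leq 4$. The strategy is therefore to verify purity degree by degree, using a combination of the general structural facts already recorded in the excerpt and the specific low-dimensional geometry.

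First I would dispose of the easy degrees. In degree $k=0$ purity is immediate, and since $X$ is normal (hence connected and irreducible in the relevant sense) the top-degree behaviour is controlled by the projectivity bound $W_kH^k=H^k$. For $k=1$, purity is exactly the content of Lemma \ref{normalpureH1}, which applies since $X$ is normal with isolated singularities (a surface has only isolated singularities once it is normal, as the singular locus has codimension at least $2$). For the high degrees $k>n=2$, that is $k=3$ and $k=4$, purity is precisely the semi-purity statement recalled in Section \ref{Sweight}: a projective variety with only isolated singularities satisfies that $W$ on $H^k(X;\QQ)$ is pure of weight $k$ for all $k>n$, by the Decomposition Theorem (see \cite{Ste}, \cite{Na2}).

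The one remaining case, and what I expect to be the main obstacle, is the middle degree $k=2=n$, where semi-purity does not directly apply. Here I would argue using the explicit description of $E_2^{p,q}(X)$ from Proposition \ref{isolsingE1}(3) together with the resolution $f:\wt X\to X$ with normal crossings divisor $D=f^{-1}(\Sigma)$. Purity of $H^2(X;\QQ)$ amounts to showing that $Gr^W_mH^2(X;\QQ)\cong \bigoplus_{p+q=2}E_2^{p,q}$ vanishes for $m\neq 2$, i.e. that the only nonzero contribution is $E_2^{0,2}$. The potentially nonzero off-weight pieces are $E_2^{1,1}$ and $E_2^{2,0}$. Since $\dim \Sigma=0$ we have $H^q(\Sigma;\QQ)=0$ for $q\geq 1$, which together with the fact that $D$ is a divisor in the surface $\wt X$ (so $D^{(1)}$ is a union of curves and $D^{(p)}=\emptyset$ for $p>2$) should force these terms to vanish after analyzing the combinatorial maps $j_p^*$. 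The key input is that $D$ is a divisor with normal crossings in a smooth projective surface, so the relevant intersection varieties $D^{(p)}$ are smooth and projective of the expected dimensions, and the purity of their cohomology constrains the $E_2$-terms.

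Alternatively, and perhaps more cleanly, I would package the middle-degree argument through the long exact sequence of the pair $(\wt X, D)$ as in the proof of Lemma \ref{normalpureH1}, extended one degree further: strict compatibility of the weight filtration with the maps $f^*$ and $j^*$, combined with purity of $H^*(\wt X;\QQ)$ and of $H^*(D;\QQ)$ (the latter being a normal crossings divisor in a surface, whose weight-graded pieces are governed by Proposition \ref{ssdivisor}), should pin down the weight of $H^2(X;\QQ)$. The delicate point is controlling $H^1(D;\QQ)$ and $H^2(D;\QQ)$, since $D$ is itself singular; but because $D$ sits inside a surface its components are curves meeting in points, so its mixed Hodge structure is transparent and its contribution to the weight on $H^2(X;\QQ)$ can be shown to respect purity. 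Once purity holds in every degree $0\leq k\leq 4$, Corollary \ref{corolari}(2) delivers formality of $X$.
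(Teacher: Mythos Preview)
Your approach has a genuine gap: purity of the weight filtration on $H^2(X;\QQ)$ is \emph{false} in general for normal projective surfaces, so the reduction to Corollary~\ref{corolari}(2) cannot work. The paper gives an explicit counterexample immediately after the proof of Theorem~\ref{formalsurfaces}: a normal projective surface $X$ obtained by blowing down a curve of negative self-intersection, for which $Gr^W_0H^2(X;\QQ)\cong\QQ^n$ and $Gr^W_1H^2(X;\QQ)\cong\QQ^{2g(C)}$ are both nonzero. In the language of Proposition~\ref{isolsingE1}(3), the terms $E_2^{1,1}(X)$ and $E_2^{2,0}(X)$ that you hope to kill are precisely $\Ker(j_2^*)^1/\Img(j_1^*)^1$ and $\Coker(j_2^*)^0$; there is no reason for $j_1^*:H^1(\wt X;\QQ)\to H^1(D^{(1)};\QQ)$ to be surjective (the exceptional curves can have genus), nor for the restriction $H^0(D^{(1)};\QQ)\to H^0(D^{(2)};\QQ)$ to be surjective (the dual graph of $D$ can have cycles).

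The paper's proof does not attempt to establish purity at $k=2$. Instead, it works directly with the multiplicative weight spectral sequence: using Lemma~\ref{normalpureH1} and semi-purity one knows purity for $k\neq 2$, and then one constructs by hand a quasi-isomorphism of \emph{bigraded algebras} $\rho:(E_2^{*,*}(X),0)\to (E_1^{*,*}(X),d_1)$ by choosing sections of $\Ker(d_1^{p,q})\twoheadrightarrow E_2^{p,q}(X)$. The point is that, for bidegree reasons, the only nontrivial products in $E_2^{*,*}(X)$ live in the first column $E_2^{0,*}$, where $\rho$ is the canonical inclusion; the one potentially problematic product $E_2^{0,1}\times E_2^{1,1}\to E_2^{1,2}$ is handled by an explicit computation showing that $\Ker(d_1^{0,1})\cdot E_1^{1,1}(X)=0$ inside $E_1$. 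This yields formality via Theorem~\ref{E1formal} and descent from $\CC$ to $\QQ$. You have correctly identified purity for $k\neq 2$, but the middle degree requires this different, more hands-on argument rather than another purity claim.
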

\begin{proof}
Let $X$ be a normal complex projective surface, which we may assume to be connected.
We use the formulas for $E_1^{*,*}(X)$ given in Proposition $\ref{isolsingE1}$.
Since normal singularities have codimension $\geq 2$ we have $\dim(\Sigma)=0$.
Since $\dim(D^{(p)})=1-p$ we have $E_1^{p,q}(X)=0$ for all $q>4-2p$.
By Lemma $\ref{normalpureH1}$ together with semi-purity, the weight filtration
on $H^k(X;\QQ)$ is pure of weight $k$ for all $k\neq 2$.
We have:
$$
E_1^{*,*}(X)=
\def\arraystretch{1.5}
\begin{array}{|l}
E^{0,4}_1\\
E^{0,3}_1\\
E^{0,2}_1\xra{d^{0,2}_1}E^{1,2}_1\\
E^{0,1}_1\xra{d^{0,1}_1}E^{1,1}_1\\
E^{0,0}_1\xra{d^{0,0}_1}E^{1,0}_1\xra{d^{1,0}_1}E^{2,0}_1\\
\hline
\end{array}
\Longrightarrow
E_2^{*,*}(X)\cong 
\def\arraystretch{1.5}
\begin{array}{|ccc }
H^4(\wt X;\QQ)\\
H^3(\wt X;\QQ)\\
\Ker(d_1^{0,2})& 0\\
\Ker(d_1^{0,1})&\Coker(d_1^{0,1})\\
\Ker(d_1^{0,0}) &0 &\Coker(d_1^{1,0})\\
\hline
\end{array}
$$

For all $p,q\geq 0$ take a section $E_2^{p,q}(X)\to \Ker(d_1^{p,q})\subset E^{p,q}_1(X)$ of the projection
$\Ker(d^{p,q}_1)\twoheadrightarrow E_2^{p,q}(X)$.
This defines a morphism $\rho:(E^{*,*}_2(X),0)\to (E^{*,*}_1(X),d_1)$ 
of bigraded complexes which is a quasi-isomorphism. We next show that $\rho$ is multiplicative.
By bidegree reasons, the only non-trivial products in $E_2^{*,*}(X)$ are the products
$$\Ker(d_1^{0,q})\times \Ker(d_1^{0,q'})\lra \Ker(d_1^{0,q+q'})$$
induced by the cup product of $H^*(\wt X;\QQ)$.
Since $\rho$ is the identity on $\Ker(d_1^{0,q})$, it preserves these products.
It also preserves the unit $1\in\Ker(d_1^{0,0})$.
It only remains to see that
the diagram
$$
\xymatrix{
\Ker(d_1^{0,1})\times \Coker(d_1^{0,1})\ar[d]^{\rho\times\rho}\ar[r]&0\ar[d]\\
E_1^{0,1}(X)\times E_1^{1,1}(X)\ar[r]&E_1^{1,2}(X)
}
$$
commutes. 
By Proposition $\ref{isolsingE1}$, the term $E_1^{0,1}(X)$ is given by the pull-back
$$
\xymatrix{\pb
\ar[d]
E_1^{0,1}(X)\ar[r]&H^1(D^{(1)};\QQ)\otimes\Lambda(t)t\ar[d]^{\delta^1}\\
H^1(\wt X;\QQ)\ar[r]^-{j^*_1}&H^1(D^{(1)};\QQ)
}
$$
and 
$E_1^{1,k}(X)\cong H^{k}(D^{(1)};\QQ)\otimes\Lambda(t)dt$ for $k\in\{1,2\}$.
The differential $d_1^{0,1}:E_1^{0,1}(X)\to E_1^{1,1}(X)$ is given by $(x,a(t))\mapsto a'(t)dt$ and the product 
$E_1^{0,1}(X)\times E_1^{1,1}(X)\to E_1^{1,2}(X)$
is given by $(x,a(t))\cdot b(t)dt=a(t)b(t)dt$.

Let $(x,a(t))\in E_1^{0,1}(X)$. Since $a(0)=0$ it follows that 
$(x,a(t))\in\Ker(d_1^{0,1})$ if and only if $a(t)=0$.
Therefore we have $\Ker(d_1^{0,1})\cdot E_1^{1,1}(X)=0$, and the above diagram commutes.
This proves that the map $\rho:(E^{*,*}_2(X),0)\to (E^{*,*}_1(X),d_1)$ is multiplicative. 
Since $X$ is connected, it has a mixed Hodge diagram $\Aa(X)$ with $H^0(\Aa(X)_\QQ)\cong \QQ$. Hence
by Theorem 
$\ref{E1formal}$ we have a string of quasi-isomorphisms of complex cdga's
$$(\Aa_{pl}(X),d)\otimes\CC\stackrel{\sim}{\longleftrightarrow}(E_1^{*,*}(X),d_1)\otimes\CC \stackrel{\sim}{\longleftarrow}(E_2^{*,*}(X),0)\otimes\CC\cong (H^{*}(X;\CC),0).$$
To conclude that $X$ is formal it suffices to apply descent of formality of cdga's from $\CC$ to $\QQ$.
\end{proof}

The following is an example of a normal projective surface with isolated singularities and non-trivial weight filtration on $H^2$
(cf. $\S7$ of \cite{ToChow}, see also \cite{BVS}).

\begin{example}
Let $C$ be a curve of degree $d\geq 3$ with $n> 0$ nodes in $\CC\PP^2$. The genus of $C$ is given by
$g=(d-1)(d-2)/2-n$. 
Choose a smooth projective curve $C'$ of
degree $d'=d+1$ intersecting $C$ transversally at smooth points of $C$, so that $|C\cap C'|=dd'$
and consider the blow-up $\wt X=Bl_{C\cap C'}\CC\PP^2$ of $\CC\PP^2$ at the $dd'$ points of $C\cap C'$.
Then the proper transform $\wt C$ of $C$ has negative self-intersection $|\wt C\cap \wt C|=d(d-d')=-d$
and we may consider the blow-down $X$ of $\wt C$ to a point. 
Explicitly, assume that the curve $C$ is given by $f(x,y,z)=0$, and that $C'$
is given by $g(x,y,z)=0$. Then $X$ is the projective variety
defined by the equation $wf(x,y,z)+g(x,y,z)=0,$
which has a normal isolated singularity at $(0,0,0,1)$.
Here $(x,y,z,w)$ are the homogeneous coordinates in $\CC\PP^3$.
The normalization of $\wt C$ is a smooth projective curve of genus $g$ and $\wt X$ is homeomorphic to the connected sum of
$dd'+1$ projective planes.
Deligne's weight spectral sequence can be written as:

$$
\wt E_1^{*,*}(X)
\cong
\def\arraystretch{1.2}
\begin{array}{|lllll}
\QQ\\
0\\
\QQ^{dd'+1}&\twoheadrightarrow&\QQ\\
0&&\QQ^{2g}\\
\QQ\oplus \QQ&\twoheadrightarrow&\QQ&\stackrel{0}{\to}&\QQ^n\\
\hline
\end{array}
\Longrightarrow 
E_2^{*,*}(X)=
\def\arraystretch{1.2}
\begin{array}{|lllll}
\QQ\\
0\\
\QQ^{dd'}&0 \\
0&\QQ^{2g}\\
\QQ&0&\QQ^n\\
\hline
\end{array}.
$$
Hence $H^2(X;\QQ)$ has a non-trivial weight filtration:
$$Gr^W_0H^2(X;\QQ)\cong \QQ^{n}, Gr^W_1H^2(X;\QQ)\cong \QQ^{2g} \text{ and }Gr^W_2H^2(X;\QQ)\cong \QQ^{dd'}.$$
Since $X$ is simply connected (see for example Corollary V.2.4 of \cite{Di}), we may compute the rational homotopy groups of $X$ with their weight filtration from a 
bigraded minimal model $\rho:M\stackrel{\sim}{\lra} E^{*,*}_2(X)$ of the bigraded algebra $E^{*,*}_2(X)$. 
The weight filtration on $\pi_i:=\pi_i(X)\otimes \QQ$
satisfies $Gr^W_q \pi_{p+q}\cong \Hom(Q(M)^{p,q},\QQ)$,
where $Q(M)^{p,q}$ denotes the indecomposables of $M$ of bidegree $(p,q)$.

The cohomology ring of $\wt X$ is given by
$H^{*}(\wt X;\QQ)\cong \QQ[a,b_1,\cdots,b_{dd'}]$ with 
$a^2=T$, $b_i^2=-T$ and $b_i\cdot b_j=0$ for all $i\neq j$. Here 
$T$ denotes the top class of $\wt X$, $a$ is the hyperplane class and $b_i$ correspond to the exceptional divisors.
Let $\gamma_i:=a-d\cdot b_i$. Then 
$E_2^{0,2}(X)\cong \Ker(d^{0,2}_1)\cong\QQ[\gamma_1,\cdots,\gamma_{dd'}]$ with
$\gamma_i^2=T(1-d^2)$ and
$\gamma_i\cdot \gamma_j=d^2T$ for $i\neq j$.
Hence we may write
$$E^{*,*}_2(X) \cong \QQ[\alpha_1,\cdots,\alpha_n,\beta_1,\cdots,\beta_{2g},\gamma_1,\cdots,\gamma_{dd'}]$$
where the generators have bidegree
$|\alpha_i|=(2,0)$, $|\beta_i|=(1,1)$ and $|\gamma_i|=(0,2)$.
By bidegree reasons, the only non-trivial products are given by
$\gamma_i^2=T(1-d^2)$ and 
$\gamma_i\cdot \gamma_j=d^2T$, for all $i\neq j$.
We compute the first steps of a minimal model for $E^{*,*}_2(X)$. Let $M_2$ be the free bigraded algebra
$$M_2=\Lambda\left(\ov\alpha_1,\cdots,\ov\alpha_n,\ov\beta_1,\cdots,\ov\beta_{2g},\ov\gamma_1,\cdots,\ov\gamma_{dd'}\right).$$
with trivial differential generated by elements of bidegree
$|\ov \alpha_i|=(2,0)$, $|\ov\beta_i|=(1,1)$ and $|\ov\gamma_i|=(0,2)$.
Then the map $\rho_2:M_2\lra E^{*,*}_2(X)$ given by $\ov x\mapsto x$ is a
$2$-quasi-isomorphism of bigraded algebras. Hence we have
$$Gr^W_0\pi_2\cong \QQ^n,\, Gr^W_1\pi_2\cong \QQ^{2g}\text{ and }Gr^W_2\pi_2\cong \QQ^{dd'}.$$
Let $M_3=M_2\otimes_d\Lambda(V_{3,0},V_{2,1},V_{1,2},V_{0,3},V_{-1,4})$ where
$V_{i,j}$ are the graded vector spaces of pure bidegree $(i,j)$, and $d:V_{i,j}\lra M_2^{i+1,j}$ are the differentials
given by:
$$V_{3,0}=\QQ\langle x_{ij}\rangle,\,dx_{ij}=\ov \alpha_i\ov\alpha_j;\,1\leq i\leq j\leq n.$$
$$V_{2,1}=\QQ\langle y_{ij}\rangle,\,dy_{ij}=\ov \alpha_i\ov\beta_j;\, 1\leq i\leq n, 1\leq j\leq 2g.$$
$$V_{1,2}=\QQ\langle z_{ij},w_{kl}\rangle,\, dz_{ij}=\ov \alpha_{i}\ov \gamma_j, dw_{kl}=\ov \beta_{k}\ov \beta_l; 1\leq i\leq n, 1\leq j\leq dd', 1\leq k\leq l\leq 2g.$$
$$V_{0,3}=\QQ\langle \tau_{ij}\rangle,\,d\tau_{ij}=\ov \beta_i\ov\gamma_j;\, 1\leq i\leq n, 1\leq j\leq 2g.$$
$$V_{-1,4}=\QQ\langle \xi_{ij}\rangle,\,d\xi_{ij}=\ov \gamma_i\ov\gamma_j;\, 1\leq i\leq dd', (i,j)\neq(1,1).$$
Then the extension $\rho_3:M_3\to E^{*,*}_2(X)$ of $\rho_2$ given by $V_{i,j}\mapsto 0$ is a $3$-quasi-isomorphism.
The formula
$Gr^W_p\pi_3\cong \Hom(V_{3-p,p},\QQ)$ gives:
$$
Gr^W_0\pi_3\cong \QQ^{{n(n+1)}\over 2}, Gr^W_1\pi_3\cong\QQ^{2g\cdot n}, Gr^W_2\pi_3\cong \QQ^{dd'\cdot n+g(2g+1)},Gr^W_3\pi_3\cong \QQ^{dd'\cdot 2g},Gr^W_4\pi_3\cong \QQ^{{dd'(dd'+1)\over 2}-1}.
$$

For example, we may take $C$ to be the nodal cubic curve given by $f(x,y,z)=y^2z-x^2z-x^3$
and $C'$ a smooth plane quartic. Then $dd'=12$, $g=0$ and $n=1$. This gives:
$$
\def\arraystretch{1.5}
\begin{array}{l}
Gr^W_0\pi_2\cong \QQ,\, Gr^W_1\pi_2=0\text{ and }Gr^W_2\pi_2\cong \QQ^{12}.\\
Gr^W_0\pi_3\cong \QQ, Gr^W_1\pi_3=0, Gr^W_2\pi_3\cong \QQ^{12},Gr^W_3\pi_3=0 \text{ and }
Gr^W_4\pi_3\cong \QQ^{77}.
\end{array}
$$
\end{example}

The following is a generalization of Theorem $\ref{formalsurfaces}$ to projective varieties of arbitrary dimension.

\begin{teo}\label{linkconnectedformal}
 Let $X$ be a complex projective variety of dimension $n$ with normal isolated singularities. 
 Denote by $\Sigma$ the singular locus of $X$, and for each $\sigma\in \Sigma$ let
 $L_\sigma$ denote the link of $\sigma$ in $X$. If $\wt H^k(L_\sigma;\QQ)=0$ for all $k\leq n-2$ 
 for every $\sigma\in \Sigma$, then $X$ is a formal topological space.
\end{teo}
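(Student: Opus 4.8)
The plan is to prove that $X$ is formal over $\CC$ and then descend to $\QQ$, following the pattern of the proof of Theorem~\ref{formalsurfaces}. By Theorem~\ref{E1formal} it suffices to produce a multiplicative quasi-isomorphism between $(E_1^{*,*}(X),d_1)\otimes\CC$ and its cohomology; since the weight spectral sequence degenerates at the second stage, this cohomology is $(E_2^{*,*}(X),0)$, and I would realize the comparison through a section $\rho\colon (E_2(X),0)\to (E_1(X),d_1)$ of the projection onto $d_1$-cohomology. The whole point is to arrange that $\rho$ can be chosen multiplicative, and the structural input that makes this possible is \emph{purity of the weight filtration away from the middle degree}.

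First I would show that $H^k(X;\QQ)$ is pure of weight $k$ for every $k\neq n$. For $k>n$ this is semi-purity. For $k<n$ I would exploit the connectivity hypothesis: writing $U=X\setminus\Sigma$ for the smooth locus, the local cohomology $H^k_\Sigma(X)\cong\bigoplus_{\sigma}\wt H^{\,k-1}(L_\sigma)$ vanishes for $k\leq n-1$, since each $L_\sigma$ is $(n-2)$-connected. Hence the long exact sequence of mixed Hodge structures
$$\cdots\to H^k_\Sigma(X)\to H^k(X;\QQ)\to H^k(U;\QQ)\to H^{k+1}_\Sigma(X)\to\cdots$$
exhibits $H^k(X;\QQ)\hookrightarrow H^k(U;\QQ)$ as an injection for $k\leq n-1$. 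As $U$ is smooth, $W_{k-1}H^k(U;\QQ)=0$, so by strictness $H^k(X;\QQ)$ has weights $\geq k$; since $X$ is projective it has weights $\leq k$, and purity follows. Through the identification $Gr^W_qH^{p+q}(X;\QQ)\cong E_2^{p,q}(X)$ this says that $E_2^{p,q}(X)=0$ for all $p\geq 1$ with $p+q\neq n$, so the only non-pure contributions sit on the antidiagonal $p+q=n$.

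Next I would read off the multiplicative structure of $E_2(X)$. Let $A:=\bigoplus_q E_2^{0,q}(X)$, which by Proposition~\ref{isolsingE1} is the pure subalgebra, realized as the algebra $\Ker(d_1^{0,*})$ of $d_1$-cocycles in $E_1^{0,*}(X)$ with $E_2^{0,q}(X)=\Ker(j_1^*)\subset H^q(\wt X;\QQ)$ for $q\geq 1$ and $E_2^{0,0}(X)=\QQ$; and let $I:=\bigoplus_{p\geq 1,\,p+q=n}E_2^{p,q}(X)$ be the non-pure part. By bidegree together with the purity just established, any product of two elements of $I$, or of an element of $A^{\geq1}:=\bigoplus_{q\geq1}E_2^{0,q}(X)$ with an element of $I$, lands in a bidegree $(a,b)$ with $a\geq 1$ and $a+b\neq n$, hence vanishes in $E_2(X)$. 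Thus $E_2(X)\cong A\oplus I$ with $I$ a square-zero ideal annihilated by $A^{\geq1}$, the only nontrivial products being the cup products inside $A$.

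Finally I would set $\rho$ to be the inclusion on $A$ (pure classes being genuine $d_1$-cocycles) and an arbitrary section on $I$, and verify multiplicativity. This is the main obstacle: one must check that the products which vanish in $E_2(X)$ also vanish \emph{on the nose} in $E_1(X)$. For $I\cdot I$ the targets lie in total degree $2n$ and bidegree $(a,2n-a)$ with $a\geq 2$, where a dimension count on the $D^{(p)}$ via Proposition~\ref{ssdivisor} forces $E_1^{a,2n-a}(X)=0$, so these products vanish automatically, exactly as in the surface case. For a product $a\cdot x$ with $a\in A^{\geq1}$ and $x\in I$, the key computation is that, by the pull-back description of $E_1^{0,q}(X)$ in Proposition~\ref{isolsingE1}, a pure representative has the form $(x_0,0)$ with $x_0\in\Ker(j_1^*)\subset H^q(\wt X;\QQ)$ and no component along $E_1^{0,q}(D)\otimes\Lambda(t)$; since the Thom--Whitney product of such a class with any class supported on $D$ factors through the restriction $j_1^*(x_0)=0$, the product is zero in $E_1(X)$. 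Granting these verifications, $\rho$ is a multiplicative quasi-isomorphism, and combining it with Theorem~\ref{E1formal} and descent of formality from $\CC$ to $\QQ$ shows that $X$ is a formal topological space.
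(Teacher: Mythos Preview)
Your proof is correct and follows essentially the same route as the paper: establish purity of $H^k(X;\QQ)$ for all $k\neq n$ (your local-cohomology argument is equivalent to the paper's Mayer--Vietoris with $X_{reg}$), then define $\rho$ by sections and check multiplicativity by showing $I\cdot I=0$ via the dimension bound $E_1^{p,q}(X)=0$ for $q>2(n-p)$ and $A^{\geq1}\cdot I=0$ via the pull-back description of $E_1^{0,q}(X)$. One small clarification: the vanishing of $\Ker(d_1^{0,q})\cdot E_1^{p,*}(X)$ for $p\geq1$ is not literally because the product ``factors through $j_1^*(x_0)=0$'', but because a $d_1$-closed element $(x_0,a(t))$ has $a'(t)=0$ and $a(0)=0$, hence $a(t)=0$, and it is this $D\otimes\Lambda(t)$-component that multiplies against classes supported on $D$---exactly the computation in the proof of Theorem~\ref{formalsurfaces}.
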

\begin{proof}
The link $L_\sigma$ of $\sigma\in\Sigma$ in $X$ is a smooth connected real manifold of dimension $2n-1$.
Let $L=\sqcup_{\sigma\in \Sigma} L_\sigma$. Then $H^0(L;\QQ)\cong H^0(\Sigma;\QQ)$.
Assume that $\wt H^k(L_\sigma;\QQ)=0$ for all $k\leq n-2$. By Poincar\'{e} duality the only non-trivial rational cohomology groups of $L$ are
in degrees $0,n-1,n$ and $2n-1$. Let $X_{reg}=X-\Sigma$. From the Mayer-Vietoris exact sequence
$$\cdots\to H^{k-1}(L;\QQ)\to H^k(X;\QQ)\to H^{k}(X_{reg};\QQ)\oplus H^k(\Sigma;\QQ)\to H^k(L;\QQ)\to\cdots$$
it follows that the map $H^k(X;\QQ)\lra H^k(X_{reg};\QQ)$ is an isomorphism whenever $k<n-1$ or $n+1<k<2n-1$,
and injective for $k=n-1$.
Since $H^k(X;\QQ)$ has weights in $\{0,1,\cdots,k\}$ and $H^k(X_{reg};\QQ)$ has weights in $\{k,k+1,\cdots,2k\}$,
and the morphism $H^k(X;\QQ)\to H^k(X_{reg};\QQ)$ is strictly compatible with the weight filtrations,
it follows that for $k\neq n,n+1$, the weight filtration on $H^k(X;\QQ)$ is pure of weight $k$.
Furthermore, by semi-purity we have that $H^{n+1}(X;\QQ)$ is pure of weight $n+1$.
Therefore the only non-trivial weights of $H^*(X;\QQ)$ are in degree $k=n$.
The weight spectral sequence for $X$ has the form
$$
E_1^{*,*}(X)=
\def\arraystretch{1.2}
\begin{array}{cccccc}
\text{\tiny{$2n$}}\\
\\
\\
\\
\text{\tiny{$n$}}\\
\\
\\
\\
\text{\tiny{$0$}}\\
\multicolumn{1}{c}{}
\end{array}
\def\arraystretch{1.2}
\begin{array}{|cccccc}
\bullet&\gris&\gris&\gris&\gris\\
\bullet&\gris&\gris&\gris&\gris\\
\bullet&\bullet&\gris&\gris&\gris\\
\bullet&\bullet&\gris&\gris&\gris\\
\bullet&\bullet&\bullet&\gris&\gris\\
\bullet&\bullet&\bullet&\gris&\gris\\
\bullet&\bullet&\bullet&\bullet&\gris\\
\bullet&\bullet&\bullet&\bullet&\gris\\
\bullet&\bullet&\bullet&\bullet&\bullet\\
\hline
\multicolumn{1}{c}{}&\multicolumn{1}{c}{}&\multicolumn{1}{c}{}&\multicolumn{1}{c}{}&\multicolumn{1}{c}{\text{\tiny{$n$}}}
\end{array}
\Longrightarrow
E_2^{*,*}(X)=
\def\arraystretch{1.2}
\begin{array}{cccccc}
\text{\tiny{$2n$}}\\
\\
\\
\\
\text{\tiny{$n$}}\\
\\
\\
\\
\text{\tiny{$0$}}\\
\multicolumn{1}{c}{}
\end{array}
\def\arraystretch{1.2}
\begin{array}{|cccccc}
\bullet&\gris&\gris&\gris&\gris\\
\bullet&\gris&\gris&\gris&\gris\\
\bullet&&\gris&\gris&\gris\\
\bullet&&\gris&\gris&\gris\\
\bullet&&&\gris&\gris\\
\bullet&\bullet&&\gris&\gris\\
\bullet&&\bullet&&\gris\\
\bullet&&&\bullet&\gris\\
\bullet&&&&\bullet\\
\hline
\multicolumn{1}{c}{}&\multicolumn{1}{c}{}&\multicolumn{1}{c}{}&\multicolumn{1}{c}{}&\multicolumn{1}{c}{\text{\tiny{$n$}}}
\end{array}
$$
where the bullets denote the non-trivial elements.
Consider the quasi-isomorphism of complexes $\rho:(E^{*,*}_2(X),0)\to (E^{*,*}_1(X),d_1)$ defined by taking sections of the projections $\Ker(d^{p,q}_1)\twoheadrightarrow E_2^{p,q}(X)$.
We next show that $\rho$ is multiplicative.
Note that by bidegree reasons, the only non-trivial products of $E_2^{*,*}(X)$ are between elements of the first column. Since $\rho$ is the identity on 
$E_2^{0,*}(X)\cong \Ker(d_1^{0,q})$,
it preserves these products. It also preserves the unit $1\in E_2^{0,0}(X)$.
Since $E_1^{p,q}(X)=0$ for all $q>2(n-p)$, we have $E^{p,n-p}_1(X)\cdot E^{p',n-p'}_1(X)=0$ for all $p,p'>0$.
Therefore it only remains to show that for $p,q>0$, the following diagram commutes.
$$
\xymatrix{
E_2^{0,q}(X)\times E_2^{p,n-p}(X)\ar[d]^{\rho\times\rho}\ar[r]&0\ar[d]\\
E_1^{0,q}(X)\times E_1^{p,n-p}(X)\ar[r]&E_1^{p,n-p+q}(X)
}
$$
By Proposition $\ref{isolsingE1}$ and since $H^q(\Sigma)=0$ for $q>0$, the term $E_1^{0,q}(X)$ is given by
$$
\xymatrix{
\pb\ar[d]
E_1^{0,q}(X)\ar[r]&E_1^{0,q}(D)\otimes\Lambda(t)\cdot t\ar[d]^{\delta^1}\\
H^1(\wt X;\QQ)\ar[r]^{j_1^*}&E_1^{0,q}(D)
},
$$
while for $p>0$ we have $E^{p,n-p}_1(X)\cong E_1^{p,n-p}(D)\otimes\Lambda(t)\oplus E_1^{p-1,n-p}(D)\otimes\Lambda(t)dt$.
The proof now follows as in the proof of Theorem $\ref{formalsurfaces}$.
\end{proof}

\begin{example}[Complete intersections]
Let $X$ be a complete intersection of dimension $n>1$.
Assume that the singular locus $\Sigma=\{\sigma_1,\cdots,\sigma_N\}$ is a finite number of points. 
The link 
of $\sigma_i$ in $X$ is $(n-2)$-connected
(this result is due to Milnor \cite{Milnor} in the case of hypersurfaces
and to Hamm \cite{Hamm} for general complete intersections).
Therefore by Theorem $\ref{linkconnectedformal}$, $X$ is formal.
Note that in particular, every complex hypersurface with isolated singularities is formal.
\end{example}

\begin{teo}\label{smoothdivisor}
Let $X$ be a projective variety with only isolated singularities. Assume that there exists a resolution of singularities $f:\wt X\to X$ such that
the exceptional divisor $D=f^{-1}(\Sigma)$ is smooth. Then $X$ is a formal topological space.
\end{teo}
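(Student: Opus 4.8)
The plan is to follow the two-column strategy behind Theorems~\ref{formalsurfaces} and~\ref{linkconnectedformal}: exhibit a multiplicative quasi-isomorphism $\rho:(E_2(X),0)\to (E_1(X),d_1)$ and then invoke Theorem~\ref{E1formal} together with descent of formality of cdga's from $\CC$ to $\QQ$. The hypothesis that $D$ is smooth is what makes the bookkeeping collapse. In the canonical hyperresolution of a smooth divisor one has $D^{(1)}=D$ and $D^{(p)}=\emptyset$ for $p\geq 2$, so $E_1^{0,q}(D)=H^q(D;\QQ)$ and $E_1^{p,q}(D)=0$ for $p>0$. Substituting this into Proposition~\ref{isolsingE1}(2), I would first check that $E_1^{p,q}(X)=0$ for all $p\geq 2$, so that the whole multiplicative weight spectral sequence of $X$ is concentrated in the columns $p=0$ and $p=1$, with $E_1^{0,q}(X)$ the pull-back of Proposition~\ref{isolsingE1}(2) and $E_1^{1,q}(X)\cong H^q(D;\QQ)\otimes\Lambda(t)\,dt$.

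Because $E_1^{2,*}(X)=0$, the differential $d_1^{1,*}$ vanishes, so $E_2^{0,q}=\Ker(d_1^{0,q})$ and $E_2^{1,q}=E_1^{1,q}(X)/\Img(d_1^{0,q})$; concretely $E_2^{1,q}\cong\Coker(j_1^*)$ for $q\geq 1$ and $E_2^{1,0}\cong\Coker(\tau)$. I define $\rho$ to be the inclusion on the column $p=0$ and a linear section on the column $p=1$; this is automatically a quasi-isomorphism of complexes, so the whole point is multiplicativity. As before, $\rho$ preserves the unit and the products $E_2^{0,q}\times E_2^{0,q'}\to E_2^{0,q+q'}$ because it is the identity on $\Ker(d_1^{0,*})$, while the products $E_2^{1,q}\times E_2^{1,q'}$ land in $E_1^{2,*}(X)=0$ and hence vanish in both pages. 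The argument therefore reduces to the mixed products $E_2^{0,q}\times E_2^{1,q'}\to E_2^{1,q+q'}$.

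For the part of the column $p=0$ in positive degree $q\geq 1$ I would argue verbatim as in Theorem~\ref{formalsurfaces}: since $H^q(\Sigma)=0$, an element of $E_1^{0,q}(X)$ has the form $(x,a(t))$ with $a(0)=0$, its product with $b(t)\,dt$ is $a(t)b(t)\,dt$, and a cocycle in $\Ker(d_1^{0,q})$ satisfies $a'(t)=0$ and $a(0)=0$, forcing $a(t)=0$; hence $\Ker(d_1^{0,q})\cdot E_1^{1,q'}(X)=0$ and the corresponding square commutes trivially. The one remaining case is the action of $E_2^{0,0}=\Ker(\tau)$ on the column $p=1$, which is multiplication by the locally constant class $c=j_1^*(x)=g_1^*(\sigma)\in H^0(D;\QQ)$ attached to $(x,\sigma)$ with $\tau(x,\sigma)=0$. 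I expect this to be the only genuine obstacle, and I would resolve it by \emph{choosing} the section on the column $p=1$ to be equivariant for this action. The set $C=\{\,c=j_1^*(x)=g_1^*(\sigma):\tau(x,\sigma)=0\,\}$ is a subalgebra of $H^0(D;\QQ)$ that preserves both $\Img(j_1^*)\subset H^q(D;\QQ)$ (because $c\in\Img(j_1^*)$) and $\Img(\tau)\subset H^0(D;\QQ)$ (because $c\cdot\tau(x',\sigma')=\tau(xx',\sigma\sigma')$, using $c\in\Img(j_1^*)\cap\Img(g_1^*)$). Since $H^0(D;\QQ)$, being a finite product of copies of $\QQ$, is semisimple, so is $C$; hence $\Img(j_1^*)$ and $\Img(\tau)$ admit $C$-invariant complements, and taking the section on the column $p=1$ to land in these complements makes $\rho$ compatible with the $E_2^{0,0}$-action. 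When $X$ is normal this step is superfluous: Zariski's main Theorem makes $g_1^*$ an isomorphism, whence $E_2^{0,0}\cong\QQ$ acts by scalars and $E_2^{1,0}=0$, exactly as in the surface case.

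With $\rho$ multiplicative, Theorem~\ref{E1formal} yields a string of quasi-isomorphisms of complex cdga's
$$(\Aa_{pl}(X),d)\otimes\CC\stackrel{\sim}{\longleftrightarrow}(E_1^{*,*}(X),d_1)\otimes\CC \stackrel{\sim}{\longleftarrow}(E_2^{*,*}(X),0)\otimes\CC\cong (H^{*}(X;\CC),0),$$
so $X$ is formal over $\CC$, and descent of formality from $\CC$ to $\QQ$ concludes the proof. The only step requiring more than the two-column bookkeeping inherited from the previous theorems is the $C$-equivariant splitting of the column $p=1$ described above.
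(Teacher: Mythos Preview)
Your proof is correct and follows exactly the paper's two-column strategy: describe $E_1^{*,*}(X)$ via Proposition~\ref{isolsingE1} with $E_1^{p,*}(X)=0$ for $p\geq 2$, take $\rho$ to be the inclusion on column $p=0$ and a linear section on column $p=1$, and check multiplicativity. The one point where you go beyond the paper --- the $C$-equivariant splitting you flag as ``the only genuine obstacle'' --- is in fact unnecessary. Working (without loss of generality) with $X$ connected, one has $E_2^{0,0}(X)\cong Gr_0^W H^0(X;\QQ)\cong\QQ$, so the image $C\subset H^0(D;\QQ)$ is just $\QQ\cdot 1_D$ and the action of $E_2^{0,0}$ on the column $p=1$ is scalar multiplication; any $\QQ$-linear section is then automatically compatible. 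You observe this under the normality hypothesis, but connectedness of $X$ already suffices. The paper's proof simply asserts that the only non-trivial products in $E_2^{*,*}(X)$ lie in the first column and does not discuss this point; its $E_2$-display also records $E_2^{1,0}=0$, which as you correctly note fails in the non-normal case, though this does not affect the argument.
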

\begin{proof}
We may assume that $X$ is connected.
By Proposition $\ref{isolsingE1}$ the multiplicative weight spectral sequence is given by
$$
E_1^{*,*}(X)
\cong
\def\arraystretch{1.5}
\begin{array}{|lllll}
E_1^{0,q}(X)&\lra&H^q(D)\otimes\Lambda(t)dt\\
E_1^{0,0}(X)&\lra& H^0(D)\otimes\Lambda(t)dt\\
\hline
\end{array}
\def\arraystretch{1.5}
\begin{array}{c}
\text{\tiny{$q\geq 1$}}\\
\text{\tiny{$q=0$}}\\
\end{array}
$$
where
$$
\xymatrix{
\pb\ar[d]
E_1^{0,0}(X)\ar[r]&H^0(D)\otimes\Lambda(t) \ar[d]^{\delta^1}\\
H^0(\wt X)\ar[r]^{j^*} &H^0(D)
}\text{ and }
\xymatrix{
\pb\ar[d]
E_1^{0,q}(X)\ar[r]&H^q(D)\otimes\Lambda(t)t \ar[d]^{\delta^1}\\
H^q(\wt X)\ar[r]^{j^*} &H^q(D)
}\text{ for }q>0.
$$
The differential $d_1:E_1^{0,*}(X)\lra E_1^{1,*}(X)$ is given by $(x,a(t))\mapsto a'(t)dt$.
The non-trivial products of $E_1^{*,*}(X)$ are the maps
$E_1^{0,q}(X)\times E_1^{0,q'}(X)\lra E_1^{0,q+q'}(X)$
given by $(x,a(t))\cdot (y,b(t))=(x\cdot y,a(t)\cdot b(t))$
and the maps
$E_1^{0,q}(X)\times E_1^{1,q'}(X)\lra E_1^{1,q+q'}(X)$
given by $(x,a(t))\cdot b(t)dt=a(t)\cdot b(t)dt$.
The unit is $(1_{\wt X},1_D)\in E_1^{0,0}(X)$.
By computing the cohomology of $E_1^{*,*}(X)$ we find:
$$
E_2^{*,*}(X)
\cong
\def\arraystretch{1.5}
\begin{array}{|lllll}
H^{2n}(\wt X)&0\\
H^{2n-1}(\wt X)&0\\
\Ker(j^*)^{2n-2}&\Coker(j^*)^{2n-2}\\
\vdots &\vdots\\
\Ker(j^*)^1&\Coker(j^*)^1\\
H^0(\wt X)& 0\\
\hline
\end{array}
$$
with the non-trivial products being $\Ker(j^*)^q\times \Ker(j^*)^{q'}\lra \Ker(j^*)^{q+q'}$.
Define a quasi-isomorphism $\rho:(E_2^{*,*}(X),0)\lra (E_1^{*,*}(X),d_1)$ of bigraded complexes as follows.
Let $\rho: E_2^{0,q}(X)\lra E_1^{0,q}(X)$ be defined by the inclusion, for all $q\geq 0$.
Define $\rho: E_2^{1,q}(X)\lra E_1^{1,q}(X)$ by taking a section
$\Coker(j^*)^{q}\to H^q(D)$ of the projection
$H^q(D)\twoheadrightarrow \Coker(j^*)^{q}$, for all $q>0$.

To see that $\rho$ is a morphism of bigraded algebras it suffices to observe that it preserves
the unit and that
$\rho(\Ker(j^*))\cdot E_1^{1,*}(X)=0$.
Since $X$ is connected, it has a mixed Hodge diagram $\Aa(X)$ with $H^0(\Aa(X)_\QQ)\cong \QQ$.
Hence by Theorem
$\ref{E1formal}$ we have a string of quasi-isomorphisms of complex cdga's
$$(\Aa_{pl}(X),d)\otimes\CC\stackrel{\sim}{\longleftrightarrow}(E_1^{*,*}(X),d_1)\otimes\CC \stackrel{\sim}{\longleftarrow}(E_2^{*,*}(X),0)\otimes\CC\cong (H^{*}(X;\CC),0).$$
To conclude that $X$ is formal it suffices to apply descent of formality of cdga's from $\CC$ to $\QQ$.
\end{proof}

\begin{example}
Projective varieties with only isolated ordinary multiple points are formal. Projective cones over smooth projective varieties are formal.
\end{example}

\begin{example}[Segre cubic]
The Segre cubic $S$ is a simply connected projective threefold with 10 singular points, and is described by 
the set of points $(x_0:x_1:x_2:x_3:x_4:x_5)$ of $\CC\PP^5$
$$S: \left\{x_0+x_1+x_2+x_3+x_4+x_5=0,\, x_0^3+x_1^3+x_2^3+x_3^3+x_4^3+x_5^3=0\right\}.$$
A resolution $f:\ov{\Mm}_{0,6}\lra S$ of $S$ is given by the moduli space $\ov{\Mm}_{0,6}$ of stable rational curves with 6 marked points,
and $f^{-1}(\Sigma)=\bigsqcup_{i=1}^{10} \CC\PP^1\times\CC\PP^1$, where $\Sigma=\{\sigma_1,\cdots,\sigma_{10}\}$ is the singular locus of $S$.
We have
$$
\wt E_1^{*,*}(S)\cong
\def\arraystretch{1.2}
\begin{array}{|lll}
\QQ&\\
0&\\
\QQ^{16}&\twoheadrightarrow&\QQ^{10}\\
0&&0\\
\QQ^{16}&\to&\QQ^{20}\\
0&&0\\
\QQ^{11}&\twoheadrightarrow&\QQ^{10}\\
\hline
\end{array}
\Longrightarrow 
E_2^{*,*}(S)=
\def\arraystretch{1.2}
\begin{array}{|ll}
\QQ&\\
0&\\
\QQ^6&0\\
0&0\\
\QQ&\QQ^5\\
0&0\\
\QQ&0\\
\hline
\end{array}.
$$
Hence $S$ has a non-trivial weight filtration, with $0\neq Gr^W_2H^3(S;\QQ)\cong \QQ^5$.
By Theorem $\ref{smoothdivisor}$ and since $S$ is simply connected,
we may compute the rational
homotopy groups $\pi_*(S)\otimes \QQ$ with their weight filtration
from a minimal model of $E_2^{*,*}(S)$.
Since $S$ is a hypersurface of $\CC\PP^4$ the map $S\to \CC\PP^4$ induces an isomorphism $H^k(\CC\PP^4)\cong H^k(S)$ for $k\neq 3,4$
(see Theorems V.2.6 and V.2.11 of \cite{Di}).
We deduce that
$E_2^{*,*}(S)\cong \QQ[a,b_1,\cdots,b_5,c_0,\cdots,c_5,e]$
with the only non-trivial products $a^2=c_0$ and $a^3=e$. The bidegrees are given by
$|a|=(0,2)$, $|b_i|=(1,2)$, $|c_i|=(0,4)$ and $e=(0,6)$.
In low degrees we obtain:
$$
\def\arraystretch{1.5}
\begin{array}{l}
Gr_2^W\pi_2\cong\QQ, Gr_2^W\pi_3\cong\QQ^5, Gr_4^W\pi_4\cong\QQ^5, Gr_3^W\pi_5\cong\QQ^{10}, Gr_5^W\pi_5\cong\QQ^5,\\
Gr_4^W\pi_6\cong\QQ^{25}, Gr_5^W\pi_6\cong\QQ^{25}, Gr_4^W\pi_7\cong\QQ^{40}, Gr_5^W\pi_7\cong\QQ^{50},Gr_7^W\pi_7\cong\QQ^{26}.  
  \end{array}
$$
\end{example}

\section{Contractions of subvarieties}\label{contractions}
Let $Y\hookrightarrow X$ be a closed immersion of complex projective varieties. Assume that $Y$ contains the singular locus of $X$ and 
denote by $X/Y$ the space obtained by contracting each connected component of $Y$ to a point.
In general, $X/Y$ is not an
algebraic variety. For instance, the contraction of a rational curve in a smooth projective surface is a complex algebraic variety if and only if 
the self-intersection number of the curve is negative.
For contractions of divisors in higher-dimensional varieties there are general conditions on the conormal line bundle, which ensure the existence of contractions 
in the categories of complex analytic spaces and complex algebraic varieties respectively (see for example \cite{Grauert}, \cite{Ar2}).
In the general situation in which $X/Y$ is a pseudo-manifold with normal isolated singularities, we can endow the rational homotopy
type of $X/Y$ with a mixed Hodge structure, coming from the mixed Hodge structures on 
$X$ and $Y$ as follows.

\begin{prop}\label{MHScontractions}
Let $Y\hookrightarrow X$ be a closed immersion of complex projective varieties. 
Assume that $Y$ contains the singular locus of $X$ and that $X$ is connected.
\begin{enumerate}[(1)]
 \item The rational homotopy type of $X/Y$ carries mixed Hodge structures. 
 \item Let $f:\wt X\lra X$ be a resolution of $X$ such that $D:=f^{-1}(Y)$ is a simple normal crossings divisor.
Denote by $\Sigma$ the singular locus of $X/Y$.
 There is a string of quasi-isomorphisms
from $\Aa_{pl}(X/Y)\otimes\CC$ to $E^{*,*}_1(X/Y)\otimes\CC$, where $E^{*,*}_1(X/Y)$ is the bigraded algebra given by
$$E_1^{*,*}(X/Y)=s_{\TW}\left(H^*(\Sigma;\QQ)\times H^*(\wt X;\QQ)\rightrightarrows E_1^{*,*}(D)\right).$$
\end{enumerate}
\end{prop}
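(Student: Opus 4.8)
The plan is to realize $X/Y$ as the cofiber of the inclusion $Y\hookrightarrow X$ at the level of cdga's, and then to lift this cofiber construction to the category of mixed Hodge diagrams using the Thom-Whitney simple, exactly as was done for the divisor case in Proposition \ref{ssdivisor} and for isolated singularities in Proposition \ref{isolsingE1}. First I would observe that contracting each connected component of $Y$ to a point is homotopically modeled by the homotopy cofiber of $Y\hookrightarrow X$, so that the rational homotopy type $\Aa_{pl}(X/Y)$ is computed by a double mapping path/cylinder construction built from $\Aa_{pl}(X)\to\Aa_{pl}(Y)$ and the augmentation $H^0(\Sigma;\QQ)\to\Aa_{pl}(Y)$ collapsing the components. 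Since $Y$ contains the singular locus of $X$, a resolution $f:\wt X\to X$ with $D:=f^{-1}(Y)$ a normal crossings divisor gives, by cohomological descent, the square relating $\wt X$, $D$, $Y$ and $X$, and the quotient $X/Y$ is then assembled from $\Sigma$ (the singular locus of $X/Y$, i.e. the collapsed components of $Y$), $\wt X$ and $D$. This is precisely the shape of the diagram $\Aa(\Sigma)\times\Aa(\wt X)\rightrightarrows\Aa(D)$ appearing in Proposition \ref{isolsingE1}, and the proposed bigraded algebra $E_1^{*,*}(X/Y)=s_{\TW}\left(H^*(\Sigma;\QQ)\times H^*(\wt X;\QQ)\rightrightarrows E_1^{*,*}(D)\right)$ is its $E_1$-term.

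For assertion (1), I would produce an explicit mixed Hodge diagram for $X/Y$ by setting
$$\Aa(X/Y):=s_{\TW}\left(\Aa(\Sigma)\times\Aa(\wt X)\rightrightarrows\Aa(D)\right),$$
where $\Aa(\Sigma)$ and $\Aa(\wt X)$ carry the trivial weight filtration (both $\Sigma$ and $\wt X$ being smooth projective) and $\Aa(D)=s_{\TW}(\Aa(D_\bullet))$ is the mixed Hodge diagram of the normal crossings divisor constructed in Proposition \ref{ssdivisor}. The two parallel maps are induced by $j_1^*:\Aa(\wt X)\to\Aa(D)$ and $g_1^*:\Aa(\Sigma)\to\Aa(D)$, the latter being the composite of the augmentation $\Sigma\to Y$ with the restriction to $D^{(1)}$. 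By Lemma \ref{TWisMHD}, the Thom-Whitney simple of this strict cosimplicial mixed Hodge diagram, with the filtrations $W(1)$ and $F(0)$, is again a mixed Hodge diagram; this endows the rational homotopy type of $X/Y$ with mixed Hodge structures once we check $\Aa(X/Y)_\QQ\simeq\Aa_{pl}(X/Y)$.

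For assertion (2), the quasi-isomorphism $\Aa(X/Y)_\QQ\simeq\Aa_{pl}(X/Y)$ follows from cohomological descent: the cartesian square resolving $X$, together with the collapse of the components of $Y$ onto $\Sigma$, satisfies descent, so the Thom-Whitney total object computes the rational homotopy type of the quotient. The identification of the $E_1$-term is then immediate from Lemma \ref{sssimple} applied to the filtration $W(1)$, using that $E_1^{0,q}=H^q$ with higher columns vanishing for the smooth projective pieces $\Sigma$ and $\wt X$, and that $E_1^{*,*}(D)$ is the multiplicative weight spectral sequence of the divisor from Proposition \ref{ssdivisor}; this yields exactly the claimed formula $E_1^{*,*}(X/Y)=s_{\TW}(H^*(\Sigma;\QQ)\times H^*(\wt X;\QQ)\rightrightarrows E_1^{*,*}(D))$, and the string of quasi-isomorphisms over $\CC$ is supplied by Theorem \ref{E1formal}.

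The main obstacle I expect is verifying cohomological descent for the quotient $X/Y$, i.e.\ that the algebra $\Aa(X/Y)_\QQ$ genuinely computes $\Aa_{pl}(X/Y)$ rather than merely the homotopy cofiber of $Y\hookrightarrow X$ as topological spaces. This requires that collapsing the components of $Y$ to points be compatible with the simplicial resolution, so that the diagram $\Sigma\times\wt X\rightrightarrows D$ is a genuine cohomological-descent resolution of the pair encoding $X/Y$; here one must use that $X/Y$ is assumed to be a pseudo-manifold with normal isolated singularities, so that the collapsed points $\Sigma$ behave like the isolated singular locus treated in Proposition \ref{isolsingE1}. Once descent is in place, the remaining steps are formal applications of Lemmas \ref{sssimple} and \ref{TWisMHD} and Theorem \ref{E1formal}, parallel to the isolated-singularities case.
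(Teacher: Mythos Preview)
Your proposal is correct and follows essentially the same approach as the paper: define $\Aa(X/Y):=s_{\TW}\left(\Aa(\Sigma)\times\Aa(\wt X)\rightrightarrows\Aa(D)\right)$, invoke Lemma~\ref{TWisMHD} to see it is a mixed Hodge diagram, use cohomological descent for $\Aa(X/Y)_\QQ\simeq\Aa_{pl}(X/Y)$, and then deduce~(2) from Lemma~\ref{sssimple} and Theorem~\ref{E1formal}. Your write-up is in fact more detailed than the paper's (which simply asserts cohomological descent without further comment), and your identification of the descent step as the point requiring care is accurate.
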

\begin{proof}
Take mixed Hodge diagrams $\Aa(\Sigma)$ and $\Aa(\wt X)$ with trivial weight filtrations, and let $\Aa(D)$ be a mixed Hodge diagram for $D$, as
defined in the proof of Proposition $\ref{ssdivisor}$.
Let
$$\Aa(X/Y):=s_{\TW}\left(\Aa(\Sigma)\times \Aa(\wt X)\rightrightarrows \Aa(D)\right).$$
By cohomological descent
we have $\Aa(X/Y)_\QQ\simeq \Aa_{pl}(X/Y)$.
Indeed, since the category of cdga's with the Thom-Whitney simple and the class of quasi-isomorphisms 
is a cohomological descent category (see \cite{GN}, Proposition 1.7.4),
it suffices to show that $H^*(\wt X,D)\cong H^*(X/Y,\Sigma)$.
 This follows from excision, since the composition $\wt X\to X\to X/Y$ is an isomorphism outside $\Sigma$.
Hence 
by Lemma $\ref{TWisMHD}$, $\Aa(X/Y)$ is a mixed Hodge diagram
for $X/Y$. This proves (1).
(2) follows from Lemma $\ref{sssimple}$ and Theorem $\ref{E1formal}$.
\end{proof}

The weight filtration on the cohomology of a projective variety with isolated singularities has 
some properties special to the algebraic case, which are not satisfied in the general setting of contractions. For instance,
 the weight filtration on the cohomology $H^*(X/Y;\QQ)$ is not semi-pure in general.
Another feature of the weight filtration that is only applicable to algebraic varieties is the existence on a non-zero cohomology class $w\in Gr_2^WH^2(X;\QQ)$,
as a consequence of hard Lefschetz theory. Therefore the weight filtration on the cohomology of a contraction may serve as an obstruction theory 
for such contraction to be an algebraic variety.

\begin{example}
Consider two projective lines in $\CC\PP^2$ intersecting at a point, and 
let $X$ denote the topological space given by contraction the two lines to a point in $\CC\PP^2$.
$$
\xymatrix{
\ar[d]
\CC\PP^1\cup \CC\PP^1 \ar[d]\ar[r]&\CC\PP^2\ar[d]\\
\{\ast\}\ar[r]&X
}
$$
Then the weight spectral sequence is given by:
$$
\wt E_1^{*,*}(X)\cong
\def\arraystretch{1.2}
\begin{array}{|ccccc}
\QQ\\
0\\
\QQ&\lra&\QQ^2\\
0&&0\\
\QQ^2&\lra&\QQ^2&\lra&\QQ\\
\hline
\end{array}\Longrightarrow
\wt E_2^{*,*}(X)\cong
\def\arraystretch{1.2}
\begin{array}{|ccccc}
\QQ\\
0\\
0&\QQ\\
0&0\\
\QQ&0&0\\
\hline
\end{array}
$$
Since
$Gr_2^WH^3(X;\QQ)\neq 0$, the weight filtration on $H^*(X)$ is not semi-pure.
Furthermore, we have $H^2(X;\QQ)=0$.
This is a proof of the fact that $X$ is not algebraic.
Since $X$ is a simply connected topological space of dimension 4, it is formal (see for example \cite{FOT}, Proposition 2.99).
We may compute the rational homotopy groups $\pi_i:=\pi_i(X)\otimes \QQ$ with their weight filtration
from a minimal model of $E_2^{*,*}(X)\cong \Lambda(a_3,b_4)/(a_3\cdot b_4, b_4^2)$. 
In low degrees, we have:
$$\pi_2=0, Gr_2^W\pi_3\cong \QQ, Gr_4^W\pi_4\cong \QQ, \pi_5=0, 
Gr_6^W\pi_6\cong\QQ\text{ and }Gr_8^W\pi_7\cong\QQ.
$$

\end{example}

We now prove analogue statements of Theorems $\ref{purityformality}$,
$\ref{linkconnectedformal}$ and $\ref{smoothdivisor}$ for contractions of subvarieties.

\begin{teo}\label{formal_contractions}
Let $X$ be a connected complex projective variety of dimension $n$ and let
$Y\hookrightarrow X$ be a closed immersion such that $Y$ contains the singular locus of $X$.
If one of the following conditions is satisfied, then $X/Y$ is a formal topological space.
\begin{enumerate}[(a)]
\item The weight filtration on $H^k(X/Y)$ is pure of weight $k$, for each $k\geq 0$.
 \item The link $L_i:=L(\sigma_i,X/Y)$ of each singular point $\sigma_i\in X/Y$ in $X/Y$ is $(n-2)$-connected.
  \item The link $L_i':=L(Y_i,X)$ of each connected component $Y_i$ of $Y$ in $X$ is $(n-2)$-connected.
 \item There is a resolution of singularities $f:\wt X\to X$ such that $D=f^{-1}(X)$ is smooth.
\end{enumerate}
\end{teo}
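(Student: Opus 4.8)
The plan is to treat the four cases in parallel, the common engine being Proposition~\ref{MHScontractions}, which produces a mixed Hodge diagram $\Aa(X/Y)$ with $\Aa(X/Y)_\QQ\simeq\Aa_{pl}(X/Y)$ together with the explicit multiplicative weight spectral sequence
$$E_1^{*,*}(X/Y)=s_{\TW}\left(H^*(\Sigma;\QQ)\times H^*(\wt X;\QQ)\rightrightarrows E_1^{*,*}(D)\right).$$
The crucial observation is that this is formally the same Thom--Whitney simple that computes $E_1(X)$ for a projective variety with isolated singularities (Proposition~\ref{isolsingE1}). Hence the pull-back description of $E_1^{0,q}(X/Y)$, the splitting $E_1^{p,q}(X/Y)\cong E_1^{p,q}(D)\otimes\Lambda(t)\oplus E_1^{p-1,q}(D)\otimes\Lambda(t)dt$ for $p>0$, the resulting $E_2$-table, and the vanishing $E_1^{p,q}(X/Y)=0$ for $q>2(n-p)$ (from $\dim D^{(p)}=n-p$ and $\dim\Sigma=0$) all carry over verbatim. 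Each case then reduces to re-running the matching argument of Section~\ref{Sisolated}, the only genuinely new input being the behaviour of the weight filtration, where algebraicity of $X$ (via semi-purity) was previously used.

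First I would dispatch the two cases needing no adaptation. For (a), since $\Aa(X/Y)$ is a mixed Hodge diagram and Theorem~\ref{E1formal} applies, the proof of Theorem~\ref{purityformality} in the case $r=\infty$ goes through unchanged: the subalgebra $M$ with $M^{0,q}=\Ker(d_1^{0,q})$ and $M^{p,q}=0$ for $p>0$ maps by quasi-isomorphisms of differential bigraded algebras to both $(E_1(X/Y),d_1)$ and $(E_2(X/Y),0)$, and purity in every degree forces $H^k(X/Y)\cong E_2^{0,k}\cong M^{0,k}$. For (d), smoothness of $D=f^{-1}(Y)$ gives $D^{(p)}=\emptyset$ for $p\geq 2$, so $E_1^{*,*}(X/Y)$ is concentrated in columns $p=0,1$ exactly as in Theorem~\ref{smoothdivisor}; the section $\rho\colon(E_2(X/Y),0)\to(E_1(X/Y),d_1)$ built there, equal to the inclusion on column $0$ and to a splitting of $H^q(D)\twoheadrightarrow\Coker(j^*)^q$ on column $1$, is multiplicative by the identical bidegree computation, and no purity hypothesis is required.

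The substance is in (b) and (c). I would first observe that these are equivalent: collapsing a component $Y_i$ to a point $\sigma$ does not change the boundary of a regular neighbourhood, so $L(\sigma,X/Y)\cong L(Y_i,X)$, and both conditions assert the same $(n-2)$-connectivity. Set $U=(X/Y)\setminus\Sigma\cong X\setminus Y$, which is smooth and quasi-projective since $Y$ contains the singular locus of $X$; thus $H^k(U;\QQ)$ carries weights in $[k,2k]$, while $H^k(X/Y;\QQ)$, built from projective pieces, carries weights in $[0,k]$, and the restriction $H^k(X/Y)\to H^k(U)$ is a morphism of mixed Hodge structures. Each $L_\sigma$ is a closed $(2n-1)$-manifold, so by connectivity and Poincar\'e duality its rational cohomology is concentrated in degrees $0,n-1,n,2n-1$, and for $L=\sqcup_\sigma L_\sigma$ one has $H^0(L)\cong H^0(\Sigma)$. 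Feeding this into the Mayer--Vietoris sequence
$$\cdots\to H^{k-1}(U)\oplus H^{k-1}(\Sigma)\to H^{k-1}(L)\to H^k(X/Y)\to H^k(U)\oplus H^k(\Sigma)\to H^k(L)\to\cdots$$
shows that whenever $H^{k-1}(L)=0$, i.e. for every $k\notin\{1,n,n+1,2n\}$, the map $H^k(X/Y)\hookrightarrow H^k(U)$ is injective; as its source has weights $\leq k$ and its target weights $\geq k$, strictness forces $H^k(X/Y)$ pure of weight $k$. The degrees $k=0,1,2n$ are pure for elementary reasons (connectedness of $X/Y$; surjectivity of $H^0(\Sigma)\to H^0(L)$; and $E_1^{p,2n-p}(X/Y)=0$ for $p>0$). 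Hence the only possibly impure degrees are $k=n$ and $k=n+1$.

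Finally I would run the multiplicativity argument, and this is exactly where semi-purity was previously invoked to kill weight in degree $n+1$; that tool is unavailable for the non-algebraic $X/Y$, but it is also unnecessary. Off the first column the non-trivial entries of $E_2^{*,*}(X/Y)$ lie only on the antidiagonals $p+q\in\{n,n+1\}$ with $p>0$, and the product of any two such classes lands in $E_1^{p+p',q+q'}(X/Y)$ with $q+q'>2\bigl(n-(p+p')\bigr)$, hence vanishes. The remaining products pair a first-column class $(x,a(t))\in\Ker(d_1^{0,q})$ with a positive-column class; exactly as in Theorem~\ref{linkconnectedformal}, the pull-back description of $E_1^{0,q}(X/Y)$ forces $a(t)=0$ on the kernel, so these products already vanish in $E_1$. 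Thus the only non-trivial products of $E_2^{*,*}(X/Y)$ are among first-column classes, on which $\rho$ is the identity, so $\rho$ is multiplicative; Theorem~\ref{E1formal} then yields a string of quasi-isomorphisms of complex cdga's from $\Aa_{pl}(X/Y)\otimes\CC$ to $(H^*(X/Y;\CC),0)$, and descent of formality from $\CC$ to $\QQ$ concludes. The main obstacle is precisely this weight bookkeeping: verifying that the Mayer--Vietoris restriction is a morphism for the mixed Hodge structure of Proposition~\ref{MHScontractions}, and recognising that permitting \emph{both} degrees $n$ and $n+1$ to be impure still leaves every relevant product vanishing, so that semi-purity may be dispensed with.
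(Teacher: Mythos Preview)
Your proposal is correct and follows essentially the same route as the paper's proof. You identify the same reductions: case~(a) reruns Theorem~\ref{purityformality} via Proposition~\ref{MHScontractions}, case~(d) reruns Theorem~\ref{smoothdivisor} verbatim, cases~(b) and~(c) are equivalent by the homeomorphism $L(\sigma,X/Y)\cong L(Y_i,X)$, and the Mayer--Vietoris argument together with the weight constraints on $H^*(U)$ forces purity outside degrees $n$ and $n+1$; the section $\rho\colon E_2\to E_1$ is then checked to be multiplicative exactly as in Theorem~\ref{linkconnectedformal}, using the vanishing $E_1^{p,q}(X/Y)=0$ for $q>2(n-p)$ and the fact that $a(t)=0$ on $\Ker(d_1^{0,q})$ for $q>0$.

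The only point where your presentation differs slightly from the paper's is in the bookkeeping of products off the first column. The paper singles out the product $E_2^{0,1}\times E_2^{p,n-p}\to E_2^{p,n+1-p}$ as the one ``new'' case compared to Theorem~\ref{linkconnectedformal} (since in the algebraic setting semi-purity had killed the target), shows it is zero, and then refers back for the rest. You instead argue uniformly that any product of two positive-column classes on the antidiagonals $p+q\in\{n,n+1\}$ lands in a bidegree where $E_1$ already vanishes, and that every mixed product $\Ker(d_1^{0,q})\times E_1^{p,q'}$ with $q>0$ vanishes because $a(t)=0$. This is a cleaner packaging of the same computation; your explicit flagging of the fact that semi-purity is unavailable but also unnecessary is exactly the point the paper is making when it draws the $E_2$-table with the extra bullets on row $n$ adjacent to row $n-1$.
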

\begin{proof}
Assume that $(a)$ is satisfied. By $(2)$ of Proposition $\ref{MHScontractions}$, 
the proof of Theorem $\ref{purityformality}$ is valid in this setting. Hence purity of the weight filtration implies formality.
Note that $(b)$ and $(c)$ are equivalent, since, the links $L_i=L(\sigma,X/Y)$ and $L_i':=L(Y_i,X)$
are homeomorphic (see Application 4.2 in \cite{Durfee}). Assume that $(b)$ is satisfied. Let $L=\bigsqcup_i L_i$.
From the Mayer-Vietoris exact sequence
$$\cdots\to  H^{k-1}(X/Y-\Sigma)\oplus H^{k-1}(\Sigma)\to H^{k-1}(L)\to H^k(X/Y)\to H^{k}(X/Y-\Sigma)\oplus H^k(\Sigma)\to H^k(L)\to\cdots$$
we find that
for $k\neq n, n+1$, the weight filtration on $H^k(X/Y;\QQ)$ is pure of weight $k$.
Note that since $X/Y$ is not algebraic in general, the weight filtration is not necessarily semi-pure.
Hence $H^{n+1}(X/Y;\QQ)$  may have non-trivial weights.
The weight spectral sequence for $X/Y$ has the form
$$
E_1^{*,*}(X/Y)=
\def\arraystretch{1.2}
\begin{array}{cccccc}
\text{\tiny{$2n$}}\\
\\
\\
\\
\text{\tiny{$n$}}\\
\\
\\
\\
\text{\tiny{$0$}}\\
\multicolumn{1}{c}{}
\end{array}
\def\arraystretch{1.2}
\begin{array}{|cccccc}
\bullet&\gris&\gris&\gris&\gris\\
\bullet&\gris&\gris&\gris&\gris\\
\bullet&\bullet&\gris&\gris&\gris\\
\bullet&\bullet&\gris&\gris&\gris\\
\bullet&\bullet&\bullet&\gris&\gris\\
\bullet&\bullet&\bullet&\gris&\gris\\
\bullet&\bullet&\bullet&\bullet&\gris\\
\bullet&\bullet&\bullet&\bullet&\gris\\
\bullet&\bullet&\bullet&\bullet&\bullet\\
\hline
\multicolumn{1}{c}{}&\multicolumn{1}{c}{}&\multicolumn{1}{c}{}&\multicolumn{1}{c}{}&\multicolumn{1}{c}{\text{\tiny{$n$}}}
\end{array}
\Longrightarrow
E_2^{*,*}(X/Y)=
\def\arraystretch{1.2}
\begin{array}{cccccc}
\text{\tiny{$2n$}}\\
\\
\\
\text{\tiny{$n+1$}}\\
\text{\tiny{$n$}}\\
\\
\\
\\
\text{\tiny{$0$}}\\
\multicolumn{1}{c}{}
\end{array}
\def\arraystretch{1.2}
\begin{array}{|cccccc}
\bullet&\gris&\gris&\gris&\gris\\
\bullet&\gris&\gris&\gris&\gris\\
\bullet&&\gris&\gris&\gris\\
\bullet&&\gris&\gris&\gris\\
\bullet&\bullet&&\gris&\gris\\
\bullet&\bullet&\bullet&\gris&\gris\\
\bullet&&\bullet&\bullet&\gris\\
\bullet&&&\bullet&\gris\\
\bullet&&&&\bullet\\
\hline
\multicolumn{1}{c}{}&\multicolumn{1}{c}{}&\multicolumn{1}{c}{}&\multicolumn{1}{c}{}&\multicolumn{1}{c}{\text{\tiny{$n$}}}
\end{array}
$$
where the bullets denote non-trivial elements. 
Consider the quasi-isomorphism of complexes $\rho:(E^{*,*}_2(X/Y),0)\to (E^{*,*}_1(X/Y),d_1)$ defined by taking sections of the projections
$\Ker(d^{p,q}_1)\twoheadrightarrow E_2^{p,q}(X/Y)$.
We next show that $\rho$ is multiplicative.
By bidegree reasons the non-trivial products in $E_2^{*,*}(X/Y)$ are
$E_2^{0,q}\times E_2^{0,q'}\lra E_2^{0,q+q'}$ for $q,q'\geq 0$
and
$E_2^{0,1}\times E_2^{p,n-p}\lra E_2^{p,n+1-p}$, for $0<p<n$.
Since $\rho$ is the identity on 
$E_2^{0,*}(X)\cong \Ker(d_1^{0,q})$,
it preserves the former products. It also preserves the unit $1\in E_2^{0,0}(X)$.
Arguing as in the proof of Theorem $\ref{linkconnectedformal}$, and using the description of $E_1^{*,*}(X/Y)$
given by Proposition $\ref{isolsingE1}$,
it is straightforward to see that the latter products are trivial and that the diagram
$$
\xymatrix{
\Ker(d_1^{0,1})\times E_2^{p,n-p}(X/Y)\ar[d]^{\rho\times\rho}\ar[r]&0\ar[d]\\
E_1^{0,1}(X/Y)\times E_1^{p,n-p}(X/Y)\ar[r]&E_1^{p,n+1-p}(X/Y)
}
$$
commutes.  
We may now proceed as in the proof of Theorem $\ref{linkconnectedformal}$. Hence the equivalent conditions $(b)$ and $(c)$ imply that $X/Y$ is formal.
Lastly, assume that $(d)$ is satisfied. The proof of Theorem $\ref{smoothdivisor}$ is valid for $X/Y$. Hence $X/Y$ is formal.
\end{proof}

\begin{example}
Let $Y\hookrightarrow X$ be a closed immersion of smooth projective varieties, with $X$ connected. Then 
$X/Y$ is a formal topological space.
\end{example}

\linespread{1}
\bibliographystyle{amsalpha}
\bibliography{bibliografia}
\mbox{}\\
\linespread{1.2}

\end{document}